\documentclass[11pt,reqno]{article}
\usepackage{latexsym, amsmath, amssymb, amsthm, a4, epsfig}
\usepackage{graphicx}
\usepackage{color}

\newtheorem{theorem}{Theorem}[section]

\newtheorem{lemma}[theorem]{Lemma}


\setlength{\textwidth}{150mm} \setlength{\textheight}{230mm}
\setlength{\oddsidemargin}{6mm} \setlength{\evensidemargin}{6mm} \setlength{\topmargin}{-12mm}

\newcommand{\ds}{\displaystyle}
\newcommand{\p}{\partial}

\newcommand{\rank}{\text{rank}\,}

\newcommand{\Rbb}{\mathbb{R}}

\newcommand{\la}{\langle}
\newcommand{\ra}{\rangle}

\newcommand{\Acal}{\mathcal{A}}
\newcommand{\Bcal}{\mathcal{B}}

\newcommand{\Kcal}{\mathcal{K}}
\newcommand{\Mcal}{\mathcal{M}}

\newcommand{\Scal}{\mathcal{S}}



\newcommand{\Ga}{\alpha}

\newcommand{\Gd}{\delta}
\newcommand{\Ge}{\epsilon}

\newcommand{\Gvf}{\varphi}

\newcommand{\Gl}{\lambda}

\newcommand{\Gt}{\theta}

\newcommand{\Gr}{\rho}

\newcommand{\GG}{\Gamma}
\newcommand{\GL}{\Lambda}

\newcommand{\GO}{\Omega}


\newcommand{\beq}{\begin{equation}}
\newcommand{\eeq}{\end{equation}}

\def\ol{\overline}


\numberwithin{equation}{section}
\numberwithin{figure}{section}

\begin{document}

\title{Generic properties of the Neumann-Poincar\'e operator: simplicity of eigenvalues and cyclic vectors\thanks{\footnotesize KA and YM were partially supported by JSPS of Japan KAKENHI grants 20K03655 and 21K13805, HK by NRF (of S. Korea) grant 2021R1A2B5B02-001786, and MP by a Simons Collaboration Grant for Mathematicians.}}

\author{Kazunori Ando\thanks{Department of Electrical and Electronic Engineering and Computer Science, Ehime University, Ehime 790-8577, Japan. Email: {\tt ando@cs.ehime-u.ac.jp}.}
\and Hyeonbae Kang\thanks{Department of Mathematics and Institute of Applied Mathematics, Inha University, Incheon 22212, S. Korea. Email: {\tt hbkang@inha.ac.kr}.}
\and Yoshihisa Miyanishi\thanks{Department of Mathematical Sciences, Faculty of Science, Shinshu University, A519, Asahi 3-1-1, Matsumoto 390-8621, Japan. Email: {\tt miyanishi@shinshu-u.ac.jp}}
\and \and Mihai Putinar\thanks{University of California at Santa Barbara, CA 93106, USA and
Newcastle University, Newcastle upon Tyne NE1 7RU, UK, {\tt mputinar@math.ucsb.edu, mihai.putinar@ncl.ac.uk}}}

\date{}
\maketitle

\begin{abstract}
Two generic properties of the Neumann--Poincar\'e operator are investigated. We prove that non-zero eigenvalues of the Neumann--Poincar\'e operator on smooth boundaries in three dimensions and higher are generically simple in the sense of Baire category. We also prove that the functions defined by the fundamental solutions to the Laplace operator located at points outside the surface are generically cyclic vectors in the sense that the collection of those points where the functions are not cyclic vectors is of measure zero.
\end{abstract}

\noindent{\footnotesize {\bf AMS subject classifications}.  47A45 (primary), 31B25 (secondary)}

\noindent{\footnotesize {\bf Key words}. Neumann--Poincar\'e operator, eigenvalue, simplicity, cyclic vector, generic}

\section{Introduction}
Let $\Omega$ be a bounded domain in $\Rbb^d$ ($d \ge 2$) with the $C^{1, \Ga}$-smooth boundary for some $\Ga>0$. The Neumann-Poincar\'e (abbreviated to NP) operator  $\Kcal_{\p \GO}^*$ on $H^{-1/2}(\p \GO)$ (the $L^2$-Sobolev space of order $-1/2$) is defined by
\beq
{\Kcal}_{\p \GO}^*[\psi](x) := \int_{\p \GO} \p_{n_x} \GG(x, y) \psi(y) \; dS_y, \quad x \in \p\GO,
\eeq
where $\p_{n_x}$ denotes the outer normal derivative (with respect to $x$) on $\p \GO$, $\GG$ the fundamental solution to the Laplacian, namely,
$$
\GG(x, y)
=
\begin{cases}
\ds \frac{1}{2\pi} \log {|x-y|}, \quad &d=2, \\
\ds \frac{-1}{d(d-2) c_d} \frac{1}{|x-y|^{d-2}}, \quad &d \geq 3,
\end{cases}
$$
$c_d$ the volume of the unit ball in $\Rbb^d$, and $dS_y$ the surface element on $\p \GO$.

Let $\Scal_{\p\GO}$ be the single layer potential on $\p\GO$, namely,
\beq
{\Scal}_{\p \GO}[\psi](x) := \int_{\p \GO} \GG(x, y) \psi(y) \; dS_y, \quad x \in \Rbb^d.
\eeq
If we consider ${\Scal}_{\p \GO}[\psi](x)$ for $x \in \p\GO$, then ${\Scal}_{\p \GO}$ maps $H^{-1/2}(\p \GO)$ into $H^{1/2}(\p \GO)$. As such an operator, ${\Scal}_{\p \GO}$ is invertible if $d \ge 3$. If $d=2$, it may not be invertible. But, if it is so, we may dilate $\GO$ and the single layer potential on the dilated surface is invertible. Since the NP spectrum (spectrum of the NP operator) is invariant under dilation, we may assume from the beginning that ${\Scal}_{\p \GO}: H^{-1/2}(\p \GO) \to H^{1/2}(\p \GO)$ is invertible. Let $\la \cdot , \cdot \ra$ denote the $H^{-1/2}$-$H^{1/2}$ pairing on $\p\GO$. Then the bilinear form
\beq \label{eq:inner_product}
\la \Gvf, \psi \ra_{\p\GO} :=-\la \Gvf, {\Scal}_{\p \GO}[\psi] \ra
\eeq
on $H^{-1/2}(\p \GO)$ becomes an inner product, and ${\Kcal}_{\p \GO}^*$ is self-adjoint on $H^{-1/2}(\p \GO)$ equipped with this inner product.

If $\p\GO$ is $C^{1,\Ga}$ as we assume in this paper, the NP operator $\Kcal_{\p\GO}^*$ is compact on $H^{-1/2}(\p \GO)$, and hence $\Kcal_{\p\GO}^*$ admits sequences of nonzero eigenvalues of finite multiplicities converging to $0$ while $0$ may or may not be an eigenvalue. These NP eigenvalues (eigenvalues of the NP operator) may have multiplicities higher than 1. For example, $0$ is the only NP eigenvalue (other than $1/2$) on disks. It is shown in \cite{KPS} (see also \cite{JK}) that $0$ is an NP eigenvalue of infinite multiplicities on lemniscates. If $\GO$ is a ball in $\Rbb^3$, then NP eigenvalues are $\frac{1}{2(2k+1)}$
with the multiplicity $2k+1$ ($k=0, 1, 2, \ldots$). In this paper, we show that the hypersurface with non-simple nonzero NP eigenvalues is rare by showing that nonzero NP eigenvalues are generically simple. Here, the notion `generic' means the collection $E$ with such a property in a topological space $X$ contains an
intersection of at most countable number of open dense subsets in $X$. The terminology is in general
used when $X$ is a Baire space: in such spaces the set $E$ is dense and said to be Baire typical. There are many works on generic properties of eigenvalues and eigenfunctions (see \cite{Uhlen} and references therein). We also mention the article \cite{Bando} which motivates the study of this paper. There the generic simplicity of eigenvalues of the Laplace-Beltrami operator on compact Riemannian manifolds is proved.

To define the class of boundary surfaces, let $D$ be a domain in $\Rbb^d$ ($d \ge 3$) with the smooth boundary $\p D$.  Fix constants $L>0$ and $c>0$ ($c$ to be chosen), and define
\begin{align}
\Mcal := & \{ G=(g_1, g_2, \ldots, g_d) \mid G \text{ is smooth in a neighborhood of } \p D,  \nonumber \\
& \quad \| G - I \|_{C^1(\p D)} \le c, \ \ |G(x)-G(y)| > L |x-y| \text{ for all } x \neq y \in \p D \},
\end{align}
where $I$ stands for the identity mapping. We choose $c$ so small that $G(\p D)$ is the boundary of a bounded domain for each $G \in \Mcal$ which we denote by $D_G$, namely, $\p D_G=G(\p D)$. We introduce the metric on $\Mcal$:
$$
p_n(G) = \left( \sum_{i=1}^d \max_{x \in \p D, \, |\Ga| \le n} |\p^\Ga g_i(x)|^2 \right)^{1/2}
$$
for $n = 0, 1, \dots$, where $\Ga$ is multi-indices.  The distance on $\Mcal$ is defined by
$$
\rho(F,G) = \sum_n \frac{1}{2^n}\frac{p_n(F-G)}{1+ p_n(F-G)}.
$$
Note that $\ol{\Mcal}$ is the collection of all smooth functions $G=(g_1, g_2, \ldots, g_d)$ satisfying
$$
|G(x)-G(y)| \ge L |x-y|
$$
for all $x,y \in \p D$ and $(\ol{\Mcal}, \rho)$ is a complete metric space.

We have the following theorem regarding generic simplicity of the NP eigenvalues:

\begin{theorem}\label{thm: generic simlicity}
The set $\Mcal_s$ of all $G \in \Mcal$ such that all non-zero NP eigenvalues on $\p D_G$ are simple is Baire typical in $(\ol{\Mcal}, \rho)$.
\end{theorem}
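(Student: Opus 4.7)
The strategy is the Baire-category scheme of Uhlenbeck, as adapted to eigenvalues of the Laplace--Beltrami operator in \cite{Bando}, transferred to the boundary integral operator $\Kcal^*_{\p D_G}$. For each $n \ge 1$ set
\beq
\Mcal_n := \{ G \in \ol{\Mcal} : \text{every NP eigenvalue } \lambda \text{ of } \Kcal^*_{\p D_G} \text{ with } |\lambda| \ge 1/n \text{ is simple} \}.
\eeq
Since the nonzero NP eigenvalues accumulate only at $0$, one has $\Mcal_s = \bigcap_{n \ge 1} \Mcal_n$, so it is enough to show that each $\Mcal_n$ is open and dense in the complete metric space $(\ol{\Mcal}, \rho)$.

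\textbf{Pullback and openness.} For $G \in \ol{\Mcal}$, the change of variables induced by $G$ pulls $\Kcal^*_{\p D_G}$ back to an operator $T_G$ on the fixed space $H^{-1/2}(\p D)$; the explicit integral kernel together with the Jacobian factors arising from $G$ shows that $G \mapsto T_G$ is continuous (in fact real-analytic) into the operator norm, and similarly for the pullback of the inner product $\la \cdot, \cdot \ra_{\p D_G}$. If $G_0 \in \Mcal_n$, then $T_{G_0}$ has only finitely many eigenvalues with $|\lambda| \ge 1/n$, each simple and isolated; Kato--Rellich perturbation theory for self-adjoint compact operators then yields a $\rho$-neighbourhood of $G_0$ contained in $\Mcal_n$, proving openness.

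\textbf{Density.} Fix $G_0 \in \ol{\Mcal}$ and suppose an eigenvalue $\lambda_0$ of $T_{G_0}$ with $|\lambda_0| \ge 1/n$ has multiplicity $k \ge 2$, with orthonormal basis $\phi_1, \dots, \phi_k$ of the eigenspace (in the pulled-back inner product). Consider deformations $G_t = G_0 + t V n$, where $V$ is a smooth scalar on $\p D$ and $n$ is the unit normal; by analytic perturbation theory, the eigenvalues branching from $\lambda_0$ are those of the $k \times k$ Hermitian matrix
\beq \label{eq:Mmatrix}
M(V)_{ij} = \la \dot T_0 [\phi_i], \phi_j \ra_{\p D}, \qquad \dot T_0 := \frac{d}{dt}\bigg|_{t=0} T_{G_t}.
\eeq
If $M(V)$ fails to be a scalar multiple of the identity for some $V$, then $\lambda_0$ splits to first order and the total multiplicity in $\{|\lambda| \ge 1/n\}$ strictly drops for small $t$; iterating over the finitely many multiple eigenvalues in this window produces an element of $\Mcal_n$ arbitrarily close to $G_0$. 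To rule out that $M(V)$ is always scalar, one derives a Hadamard-type shape-derivative formula expressing $M(V)_{ij}$ as a linear functional $V \mapsto \int_{\p D_0} V\, F_{ij}(\phi_i,\phi_j)\, dS$, where $F_{ij}$ is a bilinear density in the boundary traces and normal derivatives of the harmonic single-layer potentials $u_i := \Scal_{\p D_0}[\phi_i]$. The hypothesis forces $F_{ij} \equiv 0$ for $i \neq j$ and $F_{ii} \equiv F_{jj}$ on $\p D_0$, which in turn produces pointwise algebraic relations among the $u_i$ on $\p D_0$ that propagate, via unique continuation for the harmonic functions $u_i$ on $\Rbb^d \setminus \p D_0$, to a contradiction with the linear independence of $\phi_1, \dots, \phi_k$.

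\textbf{Principal obstacle.} The substantive difficulty lies entirely in the density step: deriving the shape-derivative formula for $\Kcal^*_{\p D_G}$ with enough precision under the $C^{1,\Ga}$ framework, and then translating the pointwise vanishing of the densities $F_{ij}$ into the required contradiction. The nonlocal character of the NP operator makes this more delicate than in the Laplace--Beltrami setting of \cite{Bando}, but the harmonicity of the potentials $u_i$ off $\p D_0$, combined with the jump relations that recover $\phi_i$ from $u_i$, supplies exactly the unique-continuation backbone needed to close the argument.
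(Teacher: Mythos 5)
Your overall architecture coincides with the paper's: write $\Mcal_s$ as a countable intersection of sets controlled by a spectral threshold, prove openness by stability of the eigenvalue count under pullback (the paper does this via kernel estimates for the pulled-back NP operator and single layer potential plus the min--max principle), and prove density by splitting a multiple eigenvalue to first order using the Hadamard-type variational formula of Grieser. Up to that point the proposal is sound, modulo routine care with the fact that the inner product $\la\cdot,\cdot\ra_{\p D_G}$ itself moves with $G$.

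The genuine gap is in the density step, and it sits exactly where you flag the ``principal obstacle.'' You must show that the first-order perturbation matrix $M(V)$ is not a scalar multiple of the identity for every normal velocity $V$; equivalently, that a non-simple eigenvalue $\Gl$ admits two normalized eigenfunctions $\Gvf_1,\Gvf_2$ whose Hadamard densities
$I_{\Gvf}=|\nabla_\p \Scal[\Gvf]|^2-c(\Gl)(\p_n\Scal[\Gvf]|_-)^2$
differ at some boundary point. Your proposed mechanism --- that $F_{ij}\equiv 0$ ($i\neq j$) and $F_{ii}\equiv F_{jj}$ yield linear relations among the potentials $u_i$ which propagate by unique continuation to contradict linear independence of the $\phi_i$ --- does not work as stated: the densities are \emph{quadratic} (differences of squares of tangential and normal derivatives), so their pointwise equality gives no linear relation among the $u_i$, and unique continuation has nothing to bite on. The paper's actual argument (Lemma \ref{eq:not=}) is of a different nature: it evaluates the identity $I_{\Gvf_1}\equiv I_{\Gvf_2}$ at the boundary maximum point $x_0$ of the harmonic function $u_1-u_2$, where the maximum principle forces $\nabla_\p(u_1-u_2)(x_0)=0$ and Hopf's lemma forces $\p_n(u_1-u_2)|_-(x_0)>0$; this pins down $\p_n u_1|_-(x_0)=-\p_n u_2|_-(x_0)\neq 0$, and then a rotation $\Gvf_\Gt=(\cos\Gt)\Gvf_1+(\sin\Gt)\Gvf_2$ in the eigenspace shows $\Gt\mapsto I_{\Gvf_\Gt}(x_0)$ cannot be constant (using $c(\Gl)>0$), producing the required pair. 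Without this (or an equivalent) argument, your density step is an assertion rather than a proof, so the proposal as written does not establish the theorem.

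A secondary caution: the branches produced by analytic perturbation theory come with a basis of the eigenspace adapted to the chosen $V$, so once you know two densities differ somewhere you still need the small observation that if all branch derivatives $\int_{\p D} V\, I_{e^{(i)}(0)}\,d\sigma$ coincided, the quadratic form $\Gvf\mapsto\int V I_\Gvf$ would be constant on the unit sphere of the eigenspace, contradicting the choice of $V$. The paper uses this implicitly; make it explicit.
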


The second generic property is regarding cyclic vectors. Let $K$ be a bounded operator on a separable Hilbert space $H$. For $\xi \in H$, we denote by $L_\xi$ the cyclic subspace generated by $\xi$, namely,
\begin{equation}
L_\xi= \overline{\text{span} \{ K^n \xi \mid n=0,1, 2, \ldots \}}.
\end{equation}
The vector $\xi$ is called a cyclic vector for $K$ if
\begin{equation}\label{fullone}
L_{\xi} = H.
\end{equation}
We also consider $N$-cyclic vectors. The $N$-tuple $(\xi_1, \xi_2, \ldots, \xi_N)$ of vectors in $H$ is called $N$-cyclic vectors for if
\begin{equation}\label{fulltwo}
L_{\xi_1} + L_{\xi_2} + \cdots + L_{\xi_N} = H.
\end{equation}
It is worth mentioning that $E=(\xi_1, \xi_2, \ldots, \xi_N)$ is called $N$-supercyclic vectors for $K$ if $\cup_{n=0}^\infty K^n(E)$ is dense in $H$.

In relation to $N$-cyclic vectors for the NP operator $\Kcal_{\p\GO}^*$ on $\p\GO$, we seek a class of explicit functions on $\p\GO$ which provides $N$-cyclic vectors generically in some sense, under the assumption that the spectral multiplicity of $\Kcal_{\p\GO}^*$ is finite. The spectral multiplicity is the maximal multiplicity of the eigenvalues (see \cite[Section 51]{Halmos}). As candidates of such functions, we consider
\begin{equation}\label{qz}
q_z(x):= v \cdot \nabla_z \Gamma(z-x), \quad x \in \p\GO
\end{equation}
for $z \in \Rbb^d \setminus \ol{\GO}$ where $v$ is a constant vector. Actually functions $q_z$ were used to extract spectral information of the NP operator on polygons numerically in \cite{HKL}.

We obtain the following theorem. 

\begin{theorem}\label{cyclic2}
Let $\GO$ be a bounded domain in $\Rbb^d$ ($d \ge 2$) with the $C^{1,\Ga}$ boundary for some $\Ga>0$. Suppose that the spectral multiplicity of $\Kcal_{\p\GO}^*$ on $H^{-1/2}(\p\GO)$ is $N < +\infty$.  Then $(q_{z_1}, q_{z_2}, \ldots, q_{z_N})$ are $N$-cyclic vectors for $\Kcal_{\p\GO}^*$ for almost all $(z_1, z_2, \ldots, z_N) \in (\Rbb^3 \setminus \ol{\GO})^N$, namely,
\begin{equation}\label{full3}
L_{q_{z_1}} + L_{q_{z_2}} + \cdots + L_{q_{z_N}} = H^{-1/2}(\p\GO).
\end{equation}
\end{theorem}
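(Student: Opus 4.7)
The plan is to exploit the spectral decomposition of $\Kcal_{\p\GO}^*$ to reduce $N$-cyclicity of $(q_{z_1},\ldots,q_{z_N})$ to a family of rank conditions on the eigenspace projections $P_k q_{z_j}$, each of which is a real-analytic constraint on $(z_1,\ldots,z_N)$, and then to establish these constraints by a density argument showing that $\{q_z\}_z$ spans each eigenspace.

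First I would set up the spectral picture. Because $\Kcal_{\p\GO}^*$ is compact and self-adjoint on $(H^{-1/2}(\p\GO),\la\cdot,\cdot\ra_{\p\GO})$, write $\Kcal_{\p\GO}^*=\sum_k \Gl_k P_k$ where $P_k$ is the orthogonal projection onto the eigenspace $E_k$ of $\Gl_k$ and $m_k:=\dim E_k\le N$. The continuous functional calculus applied to characteristic functions of isolated points of $\Gs(\Kcal_{\p\GO}^*)$ shows $P_k\xi\in L_\xi$ for every $\xi$ and every $k$ with $\Gl_k\ne 0$, while the $0$-eigenspace projection of $\xi$ is recovered as the complementary limit $\xi-\sum_{\Gl_k\ne 0}P_k\xi$. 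Therefore $L_{\xi_1}+\cdots+L_{\xi_N}$ equals the closed linear span of $\{P_k\xi_j:k\ge 0,\;1\le j\le N\}$, and \eqnref{full3} is equivalent to: for every $k$, the $N$ vectors $P_k q_{z_1},\ldots,P_k q_{z_N}$ span $E_k$.

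Next I would establish the density statement: for every non-zero $\Gvf\in H^{-1/2}(\p\GO)$ there is some $z$ in the unbounded component $\GO_e$ of $\Rbb^d\setminus\ol\GO$ with $\la\Gvf,q_z\ra_{\p\GO}\ne 0$. Using symmetry of $\Scal_{\p\GO}$ and differentiation under the integral,
\begin{equation}
\la\Gvf,q_z\ra_{\p\GO}=-\,v\cdot\nabla_z W(z),\qquad W(z):=\Scal_{\p\GO}\bigl[\Scal_{\p\GO}[\Gvf]|_{\p\GO}\bigr](z).
\end{equation}
If this vanishes throughout $\GO_e$, the harmonic function $W$ is constant along every line parallel to $v$ in $\GO_e$; since $\GO$ is bounded, all such lines at large enough perpendicular offset lie entirely in $\GO_e$, and matching behaviour at infinity (decay like $|z|^{2-d}$ for $d\ge 3$, and first forcing the logarithmic coefficient of $W$ to vanish for $d=2$) forces $W\equiv 0$ along them. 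Analytic continuation through the connected set $\GO_e$ then gives $W\equiv 0$ on $\GO_e$, hence $W|_{\p\GO}=0$ by continuity of the single layer, and two applications of the invertibility of $\Scal_{\p\GO}:H^{-1/2}(\p\GO)\to H^{1/2}(\p\GO)$ yield $\Gvf=0$. Applied with $\Gvf\in E_k$ non-zero, this shows $\{P_k q_z:z\in\GO_e\}$ has full span in $E_k$, so there exist $z_1^*,\ldots,z_{m_k}^*\in\GO_e$ for which the vectors $P_k q_{z_j^*}$ form a basis of $E_k$.

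Finally I would convert the spanning conditions into a measure-zero statement. Fix an orthonormal basis of each $E_k$. The entries of the matrix with columns $P_k q_{z_1},\ldots,P_k q_{z_N}$ are real-analytic functions of $(z_1,\ldots,z_N)\in\GO_e^N$, and so is every $m_k\times m_k$ minor. By the previous step the minor built from the first $m_k$ columns is not identically zero, so its zero set is a proper real-analytic subvariety of $\GO_e^N$ of Lebesgue measure zero. The bad set $B_k$ on which $P_k q_{z_1},\ldots,P_k q_{z_N}$ fail to span $E_k$ is contained in that zero set, and the total failure set $\bigcup_k B_k$ is a countable union of null sets. When $\Rbb^d\setminus\ol\GO$ has bounded components in addition to $\GO_e$, the same argument is repeated on each product of components, the only change being that the density step on a bounded component must be run by a reflection or Runge-type argument in place of decay at infinity.

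The main obstacle I anticipate is the density step, specifically passing from $v\cdot\nabla W\equiv 0$ on $\GO_e$ to $W\equiv 0$: the argument must be dimension-aware (to kill the logarithmic mode in $d=2$) and must use analytic continuation to cover the part of $\GO_e$ not foliated by complete lines in direction $v$. Once that step is in hand, the spectral reduction and the analytic-variety argument for the null set are essentially formal.
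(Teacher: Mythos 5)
Your proposal is correct and follows essentially the same route as the paper: reduce $N$-cyclicity to the rank condition on each eigenspace (the paper's Theorem \ref{cyclic}, which you re-derive via spectral projections), show that $z\mapsto\la q_z,\Gvf\ra_{\p\GO}=\pm v\cdot\nabla\Scal_{\p\GO}[\Scal_{\p\GO}[\Gvf]](z)$ is harmonic and not identically zero for $\Gvf\neq 0$ using decay at infinity and the invertibility of $\Scal_{\p\GO}$, and conclude that the bad set is contained in the zero set of a non-trivial real-analytic function, hence null. If anything, you are more careful than the paper on two points it glosses over: you verify that some $m_k\times m_k$ minor is not identically zero (via spanning of $E_k$ by the projections $P_kq_z$, i.e.\ linear independence of the functions $F_j$), and you flag the issues of the $d=2$ logarithmic term and of bounded components of $\Rbb^d\setminus\ol\GO$.
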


We emphasize that any $(q_{z_1}, q_{z_2}, \ldots, q_{z_M})$ with $M <N$ cannot generate $H^{1/2}(\p\GO)$ (see Lemma \ref{lem:M<N}). If all eigenvalues are simple, $q_z$ is a cyclic vector for $\Kcal_{\p\GO}^*$ for almost all $z \in \Rbb^d \setminus \ol{\GO}$.

We also obtain the following theorem with the same proof.

\begin{theorem}\label{cyclic3}
Let $\GO$ be a bounded domain in $\Rbb^d$ ($d \ge 2$) with the $C^{1,\Ga}$ boundary for some $\Ga>0$. Let $\Gvf_n$, $n=1,2, \ldots$, be an orthogonal system of eigenfunctions of $\Kcal_{\p\GO}^*$. For almost all $z \in \Rbb^3 \setminus \ol{\GO}$, $q_z$ satisfies the following property:
\beq\label{eigenmode}
\la q_z, \Gvf_n \ra_{\p\GO} \neq 0 \quad \text{for all $n$}.
\eeq
\end{theorem}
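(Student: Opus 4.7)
The plan is to adapt the proof of Theorem \ref{cyclic2} componentwise. Setting $Z_n := \{z \in \Rbb^d \setminus \ol{\GO} : \la q_z, \Gvf_n\ra_{\p\GO} = 0\}$, I would show each $Z_n$ has Lebesgue measure zero, whence $\bigcup_n Z_n$ is negligible and \eqnref{eigenmode} holds for almost every $z$.

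Using \eqnref{eq:inner_product} together with the self-adjointness of $\Scal_{\p\GO}$, I would first rewrite
\beq
h_n(z) := \la q_z, \Gvf_n\ra_{\p\GO} = -\int_{\p\GO} q_z(x)\, U_n(x)\, dS_x = -\, v \cdot \nabla_z F_n(z),
\eeq
where $U_n := \Scal_{\p\GO}[\Gvf_n] \in H^{1/2}(\p\GO)$ and $F_n(z) := \int_{\p\GO} \GG(z-x)\, U_n(x)\, dS_x$ is the single-layer potential with density $U_n$. Since $F_n$ is continuous on $\Rbb^d$ and harmonic (hence real-analytic) on $\Rbb^d \setminus \p\GO$, the function $h_n$ is real-analytic on each connected component of $\Rbb^d \setminus \ol{\GO}$; consequently, on any such component $Z_n$ either has Lebesgue measure zero or coincides with the whole component.

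The remaining task is to exclude the second alternative. Suppose $h_n \equiv 0$ on a component $W$, so that $F_n$ is constant along every line in direction $v$ lying in $W$. When $W$ is the unbounded component and $d \ge 3$, the decay $F_n(z) = O(|z|^{2-d})$ at infinity forces $F_n \equiv 0$ on $W$. Continuity of $F_n$ across $\p\GO$, combined with uniqueness for the Dirichlet problem in any bounded component of $\Rbb^d \setminus \p\GO$, then propagates this to $F_n \equiv 0$ on all of $\Rbb^d$; the Plemelj jump formula for $\p_n F_n$ across $\p\GO$ now forces the density $U_n = \Scal_{\p\GO}[\Gvf_n]$ to vanish identically on $\p\GO$, and the assumed invertibility of $\Scal_{\p\GO}$ yields $\Gvf_n = 0$, a contradiction. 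The main technical hurdle is to treat the remaining configurations -- dimension two, where $F_n$ can grow logarithmically at infinity and one first normalizes via a dilation as in the introduction, and possible bounded components of $\Rbb^d \setminus \ol{\GO}$ such as the inner cavity of an annular domain; for these I would mimic the argument from the proof of Theorem \ref{cyclic2}, exploiting the density of $\{q_z : z \in W_0\}$ in $H^{-1/2}(\p\GO)$ indexed over the unbounded component $W_0$, together with harmonic unique continuation applied to $v \cdot \nabla F_n$.
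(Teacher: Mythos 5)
Your proposal is correct and follows essentially the same route as the paper: there, too, one sets $F_n(z)=\la q_z,\Gvf_n\ra_{\p\GO}=\pm\, v\cdot\nabla\,\Scal_{\p\GO}\bigl[\Scal_{\p\GO}[\Gvf_n]\bigr](z)$, shows this harmonic function is not identically zero on $\Rbb^d\setminus\ol{\GO}$ (constancy in the direction $v$ plus decay at infinity, then the jump formula \eqnref{singlejump} and invertibility of $\Scal_{\p\GO}$ force the density to vanish, a contradiction), and concludes that the countable union of the real-analytic zero sets $Z_n$ is Lebesgue-null. The edge cases you flag --- the two-dimensional logarithmic growth and possible bounded components of $\Rbb^d\setminus\ol{\GO}$ --- are passed over silently in the paper's own argument, so your treatment is, if anything, the more scrupulous one.
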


The above theorem shows that for almost all $z$, $q_z$ contains all eigenmodes. The theorem remains true for any system of functions $\phi$ belonging to a {\it countable} subset
of $H^{-1/2}(\GO)$.

The rest of the paper is devoted to proving Theorem \ref{thm: generic simlicity} (section \ref{sec:two}) and Theorems \ref{cyclic2} and \ref{cyclic3} (section \ref{sec:three}). This paper ends with a short discussion.

\section{Proof of Theorem \ref{thm: generic simlicity}}\label{sec:two}

We restrict ourselves to $\mathbb{R}^3$ even though the argument can be applied to $\mathbb{R}^d$, $d \ge 3$.

For a bounded domain $\GO$ with the smooth boundary $\p\GO$, we define the perturbation $\GO(ha)$ for a smooth function $a$ with $\max_{x \in \p\GO} |a(x)| \le 1$ and a small real parameter $h$ as
\begin{equation}\label{normal_pert}
  \p \GO(ha) = \{ x + h a(x) n(x) \, ; \, x \in \p \GO \}.
\end{equation}

The proof of this section relies heavily on the following Hadamard variational type formula obtained in \cite{Grieser}. We also mention \cite[section 136]{RN} and references therein for general theory for analytic branches of eigenvalues and eigenvectors for perturbation of a self-adjoint operator on a Hilbert space.

\begin{theorem}[\cite{Grieser}]\label{thm:G}
  Let $\GO$ be a smooth bounded domain in $\mathbb{R}^3$, and let $\Gl \neq 0, 1/2$ be an eigenvalue of the NP operator $\Kcal^*$ on $\p \GO$ with the eigenspace $E$.
  Then there are $h_0 > 0$ and real analytic functions $h \mapsto \Gl^{(i)}(h)$, $h \to e^{(i)}(h)$ defined for $|h| < h_0$, $i = 1, \dots, \dim E$, such that
  \begin{itemize}
  \item[{\rm (i)}] $\Gl^{(i)}(0)=\Gl$ for all $i$ and $\{ e^{(1)}(0), \dots, e^{(\dim E)}(0) \}$ is a basis of $E$,
  \item[{\rm (ii)}] for each $h$ the numbers $\Gl^{(i)}(h)$ are eigenvalues of $\Kcal_h^*$ (the NP operator on $\p \GO(ha)$) with eigenfunctions $e^{(i)}(h)$ satisfying
  \beq\label{eq:normalization}
  \la e^{(i)}(h), e^{(i)}(h) \ra_{\p\GO(ha)} = 1.
  \eeq
  \end{itemize}
For each analytic branch $\Gl^{(i)}(h)$ and $e^{(i)}(h)$, we have
  \begin{equation} \label{Hadamard_variation}
    \frac{d \Gl^{(i)}}{dh} (0) = \int_{\p \GO} a \left[ |\nabla_\p \Scal[e^{(i)}(0)]|^2 - c(\Gl) (\p_n \Scal[e^{(i)}(0)] |_-)^2 \right] d\sigma,
  \end{equation}
where $\Scal$ is the single layer potential on $\p \GO$, the subscript $-$ indicates the limit (to $\p \GO$) from $\GO$,
$\nabla_\p u^{(i)} =\nabla u^{(i)} - (\p_n u^{(i)}) n$ on $\p \GO$, and
$$
c(\Gl)= \frac{1+2\Gl}{1-2\Gl}.
$$
\end{theorem}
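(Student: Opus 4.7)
The plan is to follow the standard two-stage approach for shape derivatives of eigenvalues of self-adjoint compact operators: first obtain analytic perturbation branches via Kato--Rellich theory, then extract the first variation through the transmission problem satisfied by the single layer potential of the eigenfunction, extended harmonically to $\Rbb^3$.

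For analyticity, I would pull the moving boundary $\p\GO(ha)$ back to the fixed boundary $\p\GO$ via the diffeomorphism $\GF_h(x)=x+ha(x)n(x)$, rewriting the NP kernel on $\p\GO(ha)$ in the resulting local coordinates with explicit Jacobian and unit-normal factors. Since $\GF_h$ is jointly smooth in $(x,h)$ and the kernel $\p_{n_x}\GG(x,y)$ has only the standard principal-value singularity controlled by the $C^{1,\Ga}$ regularity of $\p\GO$, the pulled-back NP operator $\widetilde{\Kcal}_h^*$ is a real-analytic family of compact operators on $H^{-1/2}(\p\GO)$; the pulled-back single layer $\widetilde{\Scal}_h$ likewise furnishes a real-analytic family of inner products $\la\cdot,\cdot\ra_h := -\la \cdot, \widetilde{\Scal}_h\cdot\ra$. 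Conjugating by the analytic operator square root of the Gram $\widetilde{\Scal}_0^{-1/2}\widetilde{\Scal}_h\widetilde{\Scal}_0^{-1/2}$ transfers $\widetilde{\Kcal}_h^*$ to a self-adjoint real-analytic family on a fixed Hilbert space, whereupon Kato--Rellich analytic perturbation theory for isolated eigenvalues of finite multiplicity delivers the branches $\Gl^{(i)}(h)$ and $e^{(i)}(h)$ of (i)--(ii); the normalization \eqnref{eq:normalization} is imposed at the end by real-analytic rescaling, and the hypothesis $\Gl\neq 0$ ensures that $\Gl$ is isolated in the spectrum (since $0$ is the only possible accumulation point of the NP spectrum).

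For \eqnref{Hadamard_variation}, I would extend each branch to $u_h := \Scal_{\p\GO(ha)}[e^{(i)}(h)]$ on $\Rbb^3$. The function $u_h$ is harmonic in $\GO(ha)$ and in $\Rbb^3\setminus\ol{\GO(ha)}$, decays at infinity, is continuous across $\p\GO(ha)$, and satisfies the jump relations $\p_n u_h|_\pm = (\Gl^{(i)}(h)\pm\tfrac{1}{2})\,e^{(i)}(h)$. Green's identity then expresses the interior and exterior Dirichlet energies of $u_h$ as explicit multiples of $\la e^{(i)}(h),\Scal_{\p\GO(ha)}e^{(i)}(h)\ra$, so $\Gl^{(i)}(h)$ is encoded as a ratio of these Dirichlet energies normalised by \eqnref{eq:normalization}. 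Differentiating at $h=0$ and invoking the classical Hadamard first variation for a Dirichlet integral under a normal boundary move --- which splits off a boundary integrand $a\,[(\p_n u)^2 - |\nabla_\p u|^2]$ on each side, with opposite signs arising from the flipped outward normals of $\GO$ and $\Rbb^3\setminus\ol\GO$ --- the volume cross-terms involving the shape derivative $\dot u$ cancel by harmonicity together with the differentiated normalisation. Substituting $\p_n u|_+ = (\Gl+\tfrac{1}{2})e$ and $\p_n u|_- = (\Gl-\tfrac{1}{2})e$ and combining the interior and exterior contributions then produces the algebraic prefactor $c(\Gl) = (1+2\Gl)/(1-2\Gl)$ of the $(\p_n \Scal[e^{(i)}(0)]|_-)^2$ term, with the exclusion $\Gl\neq 1/2$ serving precisely to keep this factor finite.

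The main obstacle I expect is the rigorous verification of the first step: establishing that the pulled-back NP operator is genuinely real-analytic as a map from $(-h_0,h_0)$ into the bounded operators on $H^{-1/2}(\p\GO)$, despite the principal-value singularity of its kernel. The natural route is to decompose the kernel into a smooth piece depending analytically on $h$ plus an $h$-independent singular core modulated by smooth factors, then invoke standard mapping properties of Calder\'on-type operators of order $-1$ on Sobolev spaces on $C^{1,\Ga}$ hypersurfaces. Once this is in hand the variational bookkeeping in the second step is essentially mechanical, but the cancellation of the cross-terms $\int \nabla u\cdot\nabla \dot u$ still requires a careful combined use of the harmonicity of $u$, the jump relations inherited by $\dot u$, and the differentiated form of \eqnref{eq:normalization}.
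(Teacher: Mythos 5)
The paper itself offers no proof of Theorem \ref{thm:G}: the statement is imported from \cite{Grieser}, and the only argument supplied in the surrounding text is the conversion between Grieser's normalization and \eqref{eq:normalization}, obtained from the divergence theorem and the jump relation \eqref{singlejump}. So there is no in-paper proof to compare against; measured against the known argument, your outline reconstructs essentially the standard (and essentially Grieser's) route: transplant to the fixed boundary by $x\mapsto x+ha(x)n(x)$, obtain a real-analytic family of compact operators (here it helps to note that the map is affine in $h$, so the pulled-back kernels are explicit convergent power series in $h$), symmetrize with respect to the $h$-dependent single-layer inner product, apply Kato--Rellich for the branches, and then read off $\Gl^{(i)}(h)$ from the interior and exterior Dirichlet energies of $u_h=\Scal_{\p\GO(ha)}[e^{(i)}(h)]$ and differentiate by Hadamard's formula. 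As a plan this is sound.

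Two points need more care than your sketch gives them. First, the right-hand side of \eqref{Hadamard_variation} is quadratic in $e^{(i)}(0)$, so the constant in the formula depends on which normalization is carried along the family; with \eqref{eq:normalization} the total Dirichlet energy of $u_h$ over $\Rbb^3$ is identically $1$ and $2\Gl^{(i)}(h)$ equals the exterior minus the interior energy, whereas Grieser fixes (in effect) the interior energy, the two differing by the factor $\tfrac12-\Gl^{(i)}(h)$ --- exactly the paper's remark. Your ``differentiated normalisation'' must therefore be $\frac{d}{dh}\bigl(E_{\mathrm{int}}+E_{\mathrm{ext}}\bigr)=0$, not Grieser's condition, or the prefactors come out wrong. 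Second, the cross terms do not simply ``cancel by harmonicity'': the shape derivative $\dot u$ jumps across $\p\GO$, with $\dot u|_+-\dot u|_-=-a\,(\p_n u|_+-\p_n u|_-)=-a\,e^{(i)}(0)$ from differentiating the continuity of $u_h$ across the moving interface, and the resulting boundary integrals, together with the transport terms and $\p_n u|_\pm=(\Gl\pm\tfrac12)e^{(i)}(0)$, are precisely what generate the relative weight $c(\Gl)$ between the tangential and normal contributions. They must be eliminated using the differentiated normalization and the differentiated jump relations; discarding them wholesale would lose the factor $c(\Gl)$ or its sign.
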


Some remarks on Theorem \ref{thm:G} are in order. The normalization \eqref{eq:normalization} differs from \cite{Grieser} by the constant factor and \eqref{Hadamard_variation} differs accordingly. There the normalization is given by $\| \nabla \Scal_h[e^{(i)}(h)] \|_{L^2(\p\GO(h a))}=1$. Using divergence theorem and the jump formula for the normal derivative of the single layer potential, namely,
\beq\label{singlejump}
\p_n \Scal_{h}[\Gvf] |_{\pm} = \left( \pm \frac{1}{2} I + \Kcal_{h}^* \right)[\Gvf] \quad\text{on } \p\GO,
\eeq
one can see
$$
\| \nabla \Scal_h[e^{(i)}(h)] \|_{L^2(\p\GO(h a))}^2 = (1/2 - \Gl^{(i)}(h)) \la e^{(i)}(h), e^{(i)}(h) \ra_{\p\GO(ha)}.
$$
The function $e^{(i)}(h)$ is analytic in the sense that there is a sequence of functions $\psi_0^{(i)}, \psi_1^{(i)}, \psi_2^{(i)}, \ldots$ in $H^{-1/2}(\p\GO)$ such that
$$
e^{(i)}(h)= \psi_0^{(i)}+ h \psi_1^{(i)} + h^2 \psi_2^{(i)} + \cdots.
$$
which converges in $H^{-1/2}(\p\GO)$.

Note that the perturbation \eqref{normal_pert} is in the direction of the normal vectors while the perturbation in the class $\Mcal$ which is under consideration in this paper is given in terms of diffeomorphisms. So, in order to be able to apply Theorem \ref{thm:G} to the context of this paper, we need to show that perturbations by diffeomorphisms can be realized locally as perturbations of the form \eqref{normal_pert} (and vice versa). The following lemma does it. 

\begin{lemma}\label{lem:ah}
For each $F \in \Mcal$, there is a constant $C>0$ such that for any sufficiently small $\Ge>0$ the following hold:
\begin{itemize}
\item[{\rm (i)}] If $\Gr(F,G) < \Ge$, then there is $a \in C^\infty(\p D_F)$ with $\max_{x \in \p D_F} |a(x)| \le 1$ such that
\beq\label{eq:aep}
\p D_G =  \p D_F(C\Ge a) .
\eeq
\item[{\rm (ii)}] If a bounded domain $\GO$ with the smooth boundary satisfies $\p\GO= \p D_F(\Ge a)$ for some $a \in C^\infty(\p D_F)$ with $\max_{x \in \p D_F} |a(x)| \le 1$, then there is $G \in \Mcal$ with $\Gr(F,G) < C\Ge$ such that
\beq
\p\GO=\p D_G.
\eeq
\end{itemize}
\end{lemma}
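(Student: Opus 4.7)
The plan is to realize the two types of perturbation as two coordinate systems on a tubular neighborhood of $\p D_F$, passing between them by projection along the normal direction. Since $F \in \Mcal$ is smooth and $\p D_F = F(\p D)$ is a compact $C^\infty$ hypersurface, the tubular neighborhood theorem yields $\delta_F > 0$ such that $\Phi(y, t) := y + t n_F(y)$ is a $C^\infty$-diffeomorphism of $\p D_F \times (-\delta_F, \delta_F)$ onto a neighborhood $U_F$ of $\p D_F$. Write $\Phi^{-1}(z) = (\pi(z), d(z))$ for $z \in U_F$.

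For part (i), the first step is to extract quantitative $C^k$ bounds from $\rho$. The structure $\rho(F,G) = \sum_n 2^{-n} p_n(F-G)/(1+p_n(F-G))$ implies that, for any fixed $k$, $p_k(F-G) \le C_k \Ge$ once $\rho(F,G) < \Ge$ and $\Ge$ is small enough; in particular $\|F - G\|_{C^1(\p D)} \le C_1 \Ge$. For $\Ge < \delta_F/C_0$, the image $G(\p D)$ lies inside $U_F$, so $\pi \circ G: \p D \to \p D_F$ is well defined and smooth, and is $C^1$-close to $\pi \circ F = F$. Since $F$ is a $C^1$-diffeomorphism from $\p D$ to $\p D_F$ (the bi-Lipschitz bound $L$ in the definition of $\Mcal$ forces the Jacobian to be non-singular), the inverse function theorem makes $\pi \circ G$ a local diffeomorphism, and a standard injectivity-from-proximity argument on a compact manifold promotes this to a global diffeomorphism for $\Ge$ sufficiently small. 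Defining $a: \p D_F \to \Rbb$ by $(C\Ge)\, a(y) := d\bigl(G((\pi \circ G)^{-1}(y))\bigr)$, the estimate $|d(G(x))| \le \|F - G\|_{C^0} \le C_0 \Ge$ gives $|a| \le 1$ once $C \ge C_0$, and by construction $\p D_G = \p D_F(C \Ge a)$.

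For part (ii), we define $G: \p D \to \Rbb^3$ directly by $G(x) = F(x) + \Ge\, a(F(x))\, n_F(F(x))$ and extend smoothly to a neighborhood of $\p D$ by any standard extension (for instance, extending $a$ and $n_F$ off $\p D_F$ via the normal coordinate and then composing). Then $G(\p D) = \{y + \Ge a(y) n_F(y) : y \in \p D_F\} = \p\GO$, so $\p D_G = \p\GO$ as required. Since $G - F = \Ge \cdot (a \circ F)(n_F \circ F)$, each $p_n(G-F)$ is bounded by $\Ge$ times a constant depending on $F$ and $a$, hence $\rho(F,G) \le C \Ge$. The condition $\|G - I\|_{C^1(\p D)} \le c$ and the bi-Lipschitz estimate $|G(x)-G(y)| > L|x-y|$ required for $G \in \Mcal$ both follow from the corresponding properties of $F$ after absorbing the $O(\Ge)$ correction, exploiting the small margin built into the choice of the constants $c$ and $L$.

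The main technical hurdle is in part (i): verifying that $\pi \circ G$ is a \emph{global} diffeomorphism rather than only a smooth map. Local invertibility follows at once from the $C^1$-closeness to $F$ and the inverse function theorem; global injectivity requires a continuity/degree argument using uniform $C^0$-proximity to the diffeomorphism $F$ together with compactness of $\p D$. All other ingredients—the tubular neighborhood construction, smoothness of compositions, and the elementary $p_n$ estimates coming out of the particular form of $\rho$—are routine.
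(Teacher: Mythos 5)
Your argument is correct and, for part (i), takes a genuinely different route from the paper's. Both proofs rest on the same geometric fact --- that for small $\Ge$ the surface $\p D_G$ is a normal graph over $\p D_F$ --- but you obtain it by invoking the tubular neighborhood theorem and showing that the normal projection composed with $G$ is a global diffeomorphism of $\p D$ onto $\p D_F$ (local invertibility from $C^1$-proximity to $F$, plus a covering/degree argument for global injectivity and surjectivity), after which $a$ and its smoothness are read off directly from the smooth inverse of the tubular-neighborhood chart. The paper instead works by hand: it proves existence of an intersection of each normal line with $\p D_G$ via a local graph and projection argument, proves \emph{uniqueness} by introducing a defining function $\eta_F$ and applying the mean value theorem to $\eta_F\circ H^{-1}$ along the normal (where $H=G\circ F^{-1}$), and obtains smoothness of $a$ from the implicit function theorem. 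Your packaging is cleaner and more conceptual; its one cost is that the step you yourself flag --- global injectivity of $\pi\circ G$ --- is only sketched, whereas the paper's defining-function argument settles the corresponding uniqueness claim explicitly and elementarily, so you should either spell out the covering-map argument or substitute a direct uniqueness proof of that kind. Part (ii) of your proof essentially coincides with the paper's ($G$ is the normal-shift map composed with $F$; the bi-Lipschitz condition survives because the strict inequality in the definition of $\Mcal$ leaves room $L_F>L$). One shared caveat: in (ii) the bound $\Gr(F,G)<C\Ge$ really requires the constant to depend on derivatives of $a$, not only on $F$ as the statement suggests; you acknowledge this dependence explicitly, the paper glosses over it, and it is harmless in the application since $a$ is fixed before letting $\Ge\to 0$.
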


\begin{proof}
(i) Fix $F \in \Mcal$ and suppose that $G \in \Mcal$ satisfies $\Gr(F,G) < \Ge$. Let $H:=G \circ F^{-1}$. Then, $H (\p D_F) = \p D_G$. Since $|H(x)-x| < C\Ge$ for some $C$ for all $x \in \p D_F$, we have
\beq\label{eq:dist}
d (y, \p D_F) < C\Ge \quad \forall y \in \p D_G.
\eeq

We claim that for each $x \in \p D_F$, there is unique $t$ such that $x + t n(x) \in \p D_G$. Moreover, $t$ satisfies $|t| < C\Ge$. Once the claim is proved, we denote such a $t$ by $C\Ge a(x)$. Then it is easy to prove, using implicit function theorem, that $a(x)$ is a smooth function. So, \eqref{eq:aep} is proved.

To prove the claim, let $x_0 \in \p D_F$. After rotation and translation if necessary, we may assume that $x_0=(0,0,0)$ and there is an open neighborhood $U \subset \p D_F$ of $x_0$ which is represented as a graph of a smooth function $f$ satisfying $f(0,0)=0$ and $\nabla f(0,0)=0$, namely, $U=\{(u, f(u)) \mid u \in B_r \}$ for some $r>0$. Here $B_r=\{ u \in \Rbb^2 \mid |u|<r \}$. In particular, $n(x_0)=(0,0,1)$. Then $H(U) \subset \p D_G$. Let $\pi$ be the projection $\pi(x_1,x_2,x_3)=(x_1,x_2)$. Since $|H(x)-x| < C\Ge$, we have
$$
B_{r-C\Ge} \subset \pi (H(U)) \subset B_{r+C\Ge}.
$$
Thus, if $\Ge$ is sufficiently small, then $(0,0) \in \pi (H(U))$, which implies that $(0,0,t) \in H(U) \subset \p D_G$ for some $t$. Because of \eqref{eq:dist}, $|t| < C\Ge$.

To prove uniqueness of such $t$, let $\eta_F$ be a defining function for $\p D_F$, namely, $\eta_F$ is smooth in a neighborhood $V$ of $\p D_F$, $\p D_F = \eta_F^{-1}(0)$ and $\min_{x \in \p D_F} |\nabla \eta_F(x)| \ge C_1$ for some $C_1>0$. Because of \eqref{eq:dist}, we may assume that $\p D_G \subset V$. Since $n(x_0)=(0,0,1)$, we have $|\frac{\p\eta_F}{\p x_3}(x_0)| \ge C_1$. Thus there is $\Gd>0$ such that
\beq\label{eq:tdelta}
\left| \frac{\p\eta_F}{\p x_3}(0,0,s) \right| \ge \frac{C_1}{2} \quad\text{if } |s| < \Gd.
\eeq
Note that $\eta_G:= \eta_F \circ H^{-1}$ is a defining function for $\p D_G$. Suppose that there are two different $t_1, t_2$ with $|t_j| < C \Ge$ such that $(0,0, t_j) \in \p D_G$. Then
$$
0= \eta_G(0,0,t_1) - \eta_G(0,0,t_2) = \frac{\p\eta_G}{\p x_3}(0,0,t_*)(t_1-t_2)
$$
for some $t_*$. Thus, $\frac{\p\eta_G}{\p x_3}(0,0,t_*)=0$, and hence $|\frac{\p\eta_F}{\p x_3}(0,0,t_*)| < C\Ge$. Thus, if $\Ge$ is sufficiently small, then this contradicts \eqref{eq:tdelta}.

(ii) Suppose $\p\GO= \p D_F(\Ge a)$. If $\Ge$ is sufficiently small, the mapping $x \mapsto x+\Ge a(x)n(x)$ is bijection from $\p D_F$ onto $\p\GO$, which can be extended to a tubular neighborhood of $\p D_F$ as an injective mapping. In fact, the mapping $x + t n(x) \mapsto x+(\Ge a(x)+t)n(x)$, $|t| <\Gd$ for some $\Gd>0$, is such an extension. Denote the extended mapping by $Q$. Then, there is a constant $C>0$ such that
$$
|Q(x)-Q(y)| > (1-C\Ge) |x-y|
$$
for all $x,y \in \p D_F$. Let $G:=Q \circ F$. Then there is $L_F >L$ such that
$$
|G(x)-G(y)| > (1-C\Ge) |F(x)-F(y)| > (1-C\Ge) L_F |x-y|
$$
for all $x,y \in \p D$. Thus, if $\Ge$ is sufficiently small, then
$$
|G(x)-G(y)| > L |x-y|
$$
for every $x \neq y \in \p D$, which implies $G \in \Mcal$. If $x \in \p D$, then $G(x)-F(x)= \Ge a(F(x)) n(F(x))$, and hence $\Gr(F, G) < C \Ge$. This completes the proof.
\end{proof}

The following lemma will be used to prove Lemma \ref{eq:number}. We emphasize that estimates in the lemma are not optimal.

\begin{lemma}\label{lem:uniform}
Let $\GO$ be a bounded domain in $\Rbb^d$ ($d \ge 3$) with the smooth boundary and let $a$ be a smooth function on $\p\GO$ such that $\max_{x \in \p D_F} |a(x)| \le 1$. Let $\Psi(x):= x + \Ge a(x) n(x)$ for $x \in \p\GO$. Denoting by $\Kcal^*$ and $\Kcal_\Ge^*$ the NP operators on $\p\GO, \ \p\GO(\Ge a)$, respectively, and by $\Scal, \ \Scal_\Ge$ the single layer potentials, the following hold: there is a constant $C>0$ such that
\beq\label{eq:stablenp}
\| \Kcal^* [\Gvf] - \Kcal_\Ge^* [\Gvf \circ \Psi^{-1}] \circ \Psi \|_{L^2(\p \GO)} \le C\Ge \| \Gvf \|_{L^2(\p \GO)}
\eeq
and
\beq\label{eq:stablesingle}
\| \Scal [\Gvf] - \Scal_\Ge [\Gvf \circ \Psi^{-1}] \circ \Psi \|_{H^{1/2}(\p \GO)} \le C\Ge \| \Gvf \|_{L^2(\p \GO)}.
\eeq
for all sufficiently small $\Ge>0$ and $\Gvf \in L^2(\p \GO)$.
\end{lemma}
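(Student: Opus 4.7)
The approach is to pull back the integrals defining $\Kcal_\Ge^*$ and $\Scal_\Ge$ via the diffeomorphism $\Psi(x) := x + \Ge a(x) n(x)$ from $\p\GO$ onto $\p\GO(\Ge a)$ and compare kernels directly on $\p\GO$. With $J_\Psi(y) := dS_{\Psi(y)}/dS_y$ denoting the induced Jacobian factor, the substitution $\tilde y = \Psi(y)$ gives
$$
\Kcal_\Ge^*[\Gvf \circ \Psi^{-1}](\Psi(x)) = \int_{\p\GO} \p_{n_{\Psi(x)}} \GG(\Psi(x),\Psi(y))\,\Gvf(y)\,J_\Psi(y)\,dS_y,
$$
and similarly for $\Scal_\Ge$. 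The desired estimates therefore reduce to $L^2\to L^2$ (respectively $L^2\to H^{1/2}$) bounds of order $\Ge$ for the integral operators on $\p\GO$ with kernels
$$
K^*(x,y) := \p_{n_x}\GG(x,y) - \p_{n_{\Psi(x)}}\GG(\Psi(x),\Psi(y))\,J_\Psi(y)
$$
and an analogous kernel $K_\Scal$ for the single layer.

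Since $\Psi - I = O(\Ge)$ uniformly in all $C^k$ norms, smoothness of $\p\GO$ and $a$ yields the uniform expansions $|\Psi(x)-\Psi(y)| = |x-y|(1+O(\Ge))$, $n_{\Psi(x)} = n(x) + O(\Ge)$, and $J_\Psi(y) = 1+O(\Ge)$, with constants independent of $x,y\in\p\GO$. Plugging these into $\p_{n_x}\GG(x,y) = c_d^{-1}\,n(x)\cdot(x-y)/|x-y|^d$ and exploiting the second-order cancellation $n(x)\cdot(x-y) = O(|x-y|^2)$ valid on smooth surfaces, one majorizes $K^*(x,y)$ pointwise by $C\Ge/|x-y|^{d-2}$ plus a borderline Calder\'on--Zygmund term of the form $\Ge(a(x)-a(y))/|x-y|^d$, which arises from the identity $n(x)\cdot[a(x)n(x) - a(y)n(y)] = (a(x)-a(y)) + O(|x-y|^2)$. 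Similarly $K_\Scal(x,y)$ is majorized by $C\Ge/|x-y|^{d-2}$, coming from $\GG(\Psi(x),\Psi(y))\,J_\Psi(y) - \GG(x,y) = O(\Ge/|x-y|^{d-2})$.

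The $L^2$ estimate \eqref{eq:stablenp} then follows from Schur's test applied to the weakly singular piece of $K^*$ (uniformly integrable since $\int_{\p\GO}|x-y|^{-(d-2)}dS_y$ is bounded) combined with the classical Calder\'on commutator bound, which controls the $L^2(\p\GO)$-operator norm of the kernel $(a(x)-a(y))/|x-y|^d$ by $C\|a\|_{C^1(\p\GO)}$. For \eqref{eq:stablesingle}, the purely weakly singular kernel $K_\Scal$ already gives an $L^2\to L^2$ bound of order $\Ge$ via Schur; the tangential derivative of $K_\Scal$ is of borderline Calder\'on--Zygmund type with $L^2$-operator norm $O(\Ge)$, yielding an $L^2\to H^1$ bound of $O(\Ge)$, and interpolation between the two produces the desired $L^2 \to H^{1/2}$ estimate.

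The main obstacle is handling the borderline exponent $|x-y|^{-(d-1)}$ in the commutator part of $K^*$ and in the tangential derivative of $K_\Scal$: on the $(d-1)$-dimensional surface this is critical and fails Schur's test, so $L^2$ boundedness must be extracted from the cancellation structure of a Calder\'on-type commutator. It is precisely here that the smoothness of $a$ and of $\p\GO$ enters essentially; the remaining pieces are controlled by elementary integration in local coordinates, and the resulting constants are intentionally non-sharp, as the authors note.
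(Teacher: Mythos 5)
Your overall strategy---pulling back $\Kcal_\Ge^*$ and $\Scal_\Ge$ through $\Psi$, absorbing the Jacobian factor $1+O(\Ge)$, and comparing kernels on $\p\GO$---is the same as the paper's, but your kernel decomposition for \eqref{eq:stablenp} has a genuine gap. From the crude expansions $n_{\Psi(x)}=n(x)+O(\Ge)$, $|\Psi(x)-\Psi(y)|=|x-y|\,(1+O(\Ge))$, $J_\Psi=1+O(\Ge)$ you cannot conclude that the difference kernel is ``weakly singular plus the commutator term $\Ge(a(x)-a(y))/|x-y|^d$'': the numerator also contains $\bigl(n_{\Psi(x)}-n(x)\bigr)\cdot(\Psi(x)-\Psi(y))$, which is only $O(\Ge|x-y|)$ pointwise and hence produces another borderline piece of size $\Ge/|x-y|^{d-1}$ that is neither $O(\Ge/|x-y|^{d-2})$ nor of commutator form; as written it is simply unaccounted for. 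Moreover, isolating an irreducible Calder\'on-type commutator is the wrong picture: since the first-order tilt of the unit normal is tangential, $n_{\Psi(x)}-n(x)=-\Ge\nabla_\p a(x)+O(\Ge^2)$, and $(a(x)-a(y))-\nabla_\p a(x)\cdot(x-y)=O(|x-y|^2)$, the term you keep and the term you dropped cancel to leading order. The upshot---and this is exactly what the paper's computation establishes---is that the full difference kernel is genuinely weakly singular, $K_\Ge(\Psi(x),\Psi(y))=K(x,y)+\Ge A(x,y)$ with $|A(x,y)|\le C/|x-y|^{d-2}$, so no commutator or Calder\'on--Zygmund theory is needed and Schur's test alone gives \eqref{eq:stablenp}. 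The paper makes the cancellation automatic by writing both kernels in graph coordinates with the Taylor-remainder form: the numerator is the quadratic form $(T(u)-T(v))\cdot R^\Ge(u,v)(T(u)-T(v))$ whose coefficients are $O(\Ge)$-perturbations of those of $K$, so the perturbation of the numerator is $O(\Ge|u-v|^2)$ rather than $O(\Ge|u-v|)$.

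There is a second gap in your treatment of \eqref{eq:stablesingle}. Your route is interpolation between an $L^2\to L^2$ bound and an $L^2\to H^1$ bound, and the latter rests on the assertion that the tangential $x$-derivative of the single-layer difference kernel is ``of borderline Calder\'on--Zygmund type with $L^2$-operator norm $O(\Ge)$.'' A pointwise bound of size $\Ge/|x-y|^{d-1}$ does not give $L^2$-boundedness, and the required cancellation for this derived kernel is not exhibited, so this step is an assertion rather than a proof. The paper proceeds differently and more economically: it shows the difference kernel $B$ satisfies both $|B(x,y)|\le C/|x-y|^{d-2}$ and the smoothness estimate $|B(x,y)-B(x',y)|\le C|x-x'|/|x-y|^{d-1}$, and then invokes the mapping property that such weakly singular kernels send $L^2(\p\GO)$ into $H^s(\p\GO)$ for every $s<1$ (citing \cite[Theorem A.1]{FKM}), which yields the $H^{1/2}$ bound with the factor $\Ge$ directly. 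If you want to salvage your interpolation argument you would have to prove the $O(\Ge)$ operator bound for the borderline derivative kernel; otherwise the cleaner fix is to verify the two kernel estimates above and quote such a smoothing criterion.
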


\begin{proof}
We assume that $d=3$ since the case for higher dimensions can be dealt with in the same way. Let $K(x,y)$ ($x,y\in \p\GO$) be the integral kernel for $\Kcal$. Fix $x_0 \in \p\GO$. Using the local coordinates, we may assume that $x_0=0$ and $\p\GO$ near $x_0$ is given as a graph, namely $(u, f(u))$ where $u \in B_r$ and $f$ is a smooth function in $B_r$. If $x=(u,f(u))$ and $y=(v,f(v))$ lie on $\p\GO$, then by Taylor's theorem we have
\begin{align}
K(x,y) &= \frac{1}{4\pi \sqrt{1+|\nabla f(u)|^2}} \frac{f(u)-f(v)-\nabla f(u) (u-v)}{(|u-v|^2 + |f(u)-f(v)|^2)^{3/2}} \nonumber \\
&= \frac{1}{4\pi \sqrt{1+|\nabla f(u)|^2}} \frac{(u-v) \cdot R(u,v)(u-v)}{|u-v|^3 (1+ E(u,v))^{3/2}}, \label{eq:Kxy}
\end{align}
where $R(u,v)=(R_{ij}(u,v))$ with
\beq
R_{ij}(u,v) = \int_0^1 (1-t) \p_i\p_j f((1-t)u+tv) dt
\eeq
and
\beq
E(u,v)= \frac{|f(u)-f(v)|^2}{|u-v|^2}.
\eeq

If $x=(u,f(u))$, then we have
\beq\label{eq:TuFu}
\Psi(x) = (T(u), F(u))
\eeq
where
\beq
T(u) = u - \frac{\Ge \tilde{a}(u) \nabla f(u)}{\sqrt{1+|\nabla f(u)|^2}}
\eeq
and
\beq
F(u) = f(u) + \frac{\Ge \tilde{a}(u)}{\sqrt{1+|\nabla f(u)|^2}}.
\eeq
Here $\tilde{a}(u):= a(u,f(u))$. Thus there is $r_1 \le r$ such that $\p\GO(\Ge a)$ near $\Psi(x_0)$ is given as a graph $(u,f_\Ge(u))$, $u \in B_{r_1}$, where
$$
f_\Ge(u)= F(T^{-1}(u)).
$$
Let $K_\Ge(x,y)$ be the integral kernel of $\Kcal_\Ge$. If $x'=(u,f_\Ge(u))$ and $y'=(v,f_\Ge(v))$ lie on $\p\GO(\Ge a)$, then $K_\Ge(x',y')$ is given by \eqref{eq:Kxy} with $f$ replaced by $f_\Ge$.

Let $x=(u,f(u)), \ y=(v,f(v)) \in \p\GO$ and $x'= \Psi(x)$ and $y'= \Psi(y)$. Then, by \eqref{eq:TuFu}, we have
\begin{align*}
K_\Ge(\Psi(x), \Psi(y))= \frac{1}{4\pi \sqrt{1+|\nabla f_\Ge(T(u))|^2}} \frac{(T(u)-T(v)) \cdot R^\Ge(u,v)(T(u)-T(v))}{|T(u)-T(v)|^3 (1+ E_\Ge(u,v))^{3/2}}, \label{eq:Kxy}
\end{align*}
where $R^\Ge (u,v)=(R_{ij}^\Ge (u,v))$ with
\beq
R_{ij}^\Ge (u,v) = \int_0^1 (1-t) \p_i\p_j f_\Ge((1-t)T(u)+tT(v)) dt
\eeq
and
\beq
E_\Ge(u,v)= \frac{|f_\Ge(T(u))-f_\Ge(T(v))|^2}{|T(u)-T(v)|^2}= \frac{|F(u)-F(v)|^2}{|T(u)-T(v)|^2}.
\eeq
Since
$$
T(u) = u + \Ge T_1(u), \quad F(u) = f(u) + \Ge a_1(u), \quad f_\Ge(u) = f(u) + \Ge a_2(u)
$$
where $T_1, a_1, a_2$ are smooth functions in $B_{r_1}$ bounded independently of $\Ge$ ($T_1$ is a $\Rbb^2$-valued function), it is straight-forward to see that
$$
K_\Ge(\Psi(x), \Psi(y))= K(x,y) + \Ge A(x,y),
$$
where $A(x,y)$ satisfies
\beq
|A(x,y)| \le \frac{C}{|x-y|}
\eeq
for some constant $C$. Let $\Acal$ be the integral operator defined by $A(x,y)$. It is easy to see that $\Acal$ is bounded on $L^2(\p \GO)$.

The Jacobian determinant $|J\Psi(x)|$ of $\Psi$ on $\p\GO$ is of the form $|J\Psi(x)|=1+\Ge b(x)$ for some bounded smooth function $b$. Let $\Gvf \in L^2(\p \GO)$. We have
\begin{align*}
\Kcal_\Ge^* [\Gvf \circ \Psi^{-1}] (\Psi(x)) = \int_{\p\GO} K_\Ge(\Psi(x), \Psi(y)) \Gvf(y)(1+\Ge b(y)) dS_y ,
\end{align*}
and hence
\begin{align*}
\Kcal_\Ge^* [\Gvf \circ \Psi^{-1}] (\Psi(x)) - \Kcal^* [\Gvf](x)
= \Ge \Acal[\Gvf(1+\Ge b)](x) + \Ge \Kcal^*[\Gvf b](x).
\end{align*}
Thus \eqref{eq:stablenp} follows.

Note that
\begin{align*}
\Scal_\Ge^* [\Gvf \circ \Psi^{-1}] (\Psi(x)) = \frac{1}{4\pi} \int_{\p\GO} \frac{\Gvf(y)(1+\Ge b(y))}{|\Psi(x)-\Psi(y)|} dS_y .
\end{align*}
It is easy to see that
\beq
\frac{1}{|\Psi(x)-\Psi(y)|} = \frac{1}{|x-y|} + \Ge B(x,y),
\eeq
where $B(x,y)$ satisfies
\beq\label{eq:est1}
|B(x,y)| \le \frac{C}{|x-y|}
\eeq
and
\beq\label{eq:est2}
|B(x,y)-B(x',y)| \le \frac{C|x-x'|}{|x-y|^2}
\eeq
for some constant $C$ and for all $x,x',y \in \p\GO$. Let $\Bcal$ be the integral operator defined by $B(x,y)$. Then, we have
$$
\Scal_\Ge^* [\Gvf \circ \Psi^{-1}] (\Psi(x)) - \Scal^* [\Gvf](x)
= \Ge \Bcal[\Gvf(1+\Ge b)](x) + \Ge \Scal^*[\Gvf b](x).
$$
It is known that an integral operator whose kernel satisfies \eqref{eq:est1} and \eqref{eq:est2} is bounded from $L^2(\p\GO)$ into $H^s(\p\GO)$ for any $s<1$ (see, for example, \cite[Theorem A.1]{FKM}). So, in particular, \eqref{eq:stablesingle} follows.
\end{proof}

For $F \in \Mcal$, we denote by $\Kcal_F^*$ and $\Scal_F$ the NP operator and the single layer potential on $\p D_F$.
Let, for ease of notation,
\beq
\la \Gvf, \psi \ra_F:= \la \Gvf, \psi \ra_{\p D_F}
\eeq
for $\Gvf, \psi \in H^{-1/2}(\p D_F)$, and let
\beq
\| \Gvf \|_F^2 := \la \Gvf, \Gvf \ra_F.
\eeq

The following lemma shows that the number of NP eigenvalues in an interval away from $0$ is invariant under small perturbation in $\Mcal$. 

\begin{lemma}\label{eq:number}
Let $F \in \Mcal$ and $a,b \in (0,1/2)$ be numbers such that $-a,b$ are not eigenvalues of $\Kcal_F^*$. Let $I$ be either $(-1/2,-a)$ or $(b,1/2)$. There is $\Ge>0$ such that if $\Gr(F,G)< \Ge$, then
\beq\label{2000}
\#(I,F)=\#(I,G)
\eeq
where $\#(I,F)$ denotes the number of eigenvalues of $\Kcal_F^*$ in $I$ counting multiplicities.
\end{lemma}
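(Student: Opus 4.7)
The plan is to combine the quantitative stability of Lemma~\ref{lem:uniform} with the classical Riesz projection argument for perturbations of compact operators. The abstract principle I want to use is that if $T_\Ge \to T$ in norm and $\Gamma$ is a rectifiable contour disjoint from $\sigma(T)$, then $\Gamma$ is disjoint from $\sigma(T_\Ge)$ for small $\Ge$ and the corresponding Riesz projections satisfy $\|P_\Ge - P\|<1$, forcing $\rank P_\Ge=\rank P$ when both projections have finite rank.

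First I would invoke Lemma~\ref{lem:ah}(i) to reduce to the case of a normal perturbation: whenever $\Gr(F,G)<\Ge$ we may write $\p D_G=\p D_F(C\Ge a)$ for some smooth $a$ with $|a|\le 1$. With $\Psi(x)=x+C\Ge a(x)n(x)$ the normal map from $\p D_F$ onto $\p D_G$ and $V\Gvf:=\Gvf\circ\Psi^{-1}$ the associated (bounded invertible) pullback $L^2(\p D_F)\to L^2(\p D_G)$, I transport $\Kcal_G^*$ back to $\p D_F$ as
\beq
\tilde{\Kcal}_G^*:=V^{-1}\Kcal_G^* V.
\eeq
Since $\tilde{\Kcal}_G^*$ is similar to $\Kcal_G^*$, it has the same spectrum and the same algebraic multiplicities; and because $\Kcal_G^*$ is self-adjoint in $\la\cdot,\cdot\ra_G$, algebraic and geometric multiplicities coincide, so for any Borel set $J$ disjoint from $\{0\}$ the rank of the Riesz projection of $\tilde{\Kcal}_G^*$ onto eigenvalues in $J$ equals $\#(J,G)$. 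Lemma~\ref{lem:uniform} delivers the crucial estimate
\beq
\| \Kcal_F^* - \tilde{\Kcal}_G^* \|_{L^2(\p D_F)\to L^2(\p D_F)}\le C'\Ge.
\eeq

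Now I would pick a positively oriented contour $\Gamma\subset\Cbb$ enclosing exactly $\sigma(\Kcal_F^*)\cap I$ and avoiding the rest of $\sigma(\Kcal_F^*)$; this is possible because $-a,b\notin\sigma(\Kcal_F^*)$ and the non-zero spectrum of the compact operator $\Kcal_F^*$ is discrete. The resolvent $(z-\Kcal_F^*)^{-1}$ is uniformly bounded on the compact set $\Gamma$, so for $\Ge$ small enough the contour also avoids $\sigma(\tilde{\Kcal}_G^*)$ and the two Riesz projections satisfy $\|P_F-\tilde{P}_G\|<1$, which forces $\rank P_F=\rank\tilde{P}_G$. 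Combining with the two identifications $\rank P_F=\#(I,F)$ and $\rank\tilde{P}_G=\#(I,G)$ (the first by self-adjointness of $\Kcal_F^*$ in $\la\cdot,\cdot\ra_F$, the second by the similarity argument above) yields \eqref{2000}.

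The main technical point to watch is the mismatch between the ambient Hilbert space in which Lemma~\ref{lem:uniform} is formulated ($L^2$) and the one in which $\Kcal_F^*$ is intrinsically self-adjoint ($H^{-1/2}$ with $\la\cdot,\cdot\ra_F$). I sidestep this by performing the spectral count in $L^2$: for the smooth boundaries at hand, elliptic regularity forces every eigenfunction for a non-zero eigenvalue to be smooth, so the non-zero eigenvalues of $\Kcal_F^*$ and their geometric multiplicities are the same whether we view the operator on $L^2(\p D_F)$ or on $H^{-1/2}(\p D_F)$, and similarly for $\Kcal_G^*$. Everything else, including the conversion of diffeomorphism perturbations into normal perturbations and the uniform bound on $C$ as $\Ge\to 0$, is already packaged in Lemmas~\ref{lem:ah} and~\ref{lem:uniform}.
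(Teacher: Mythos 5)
Your route is genuinely different from the paper's: the paper gets $\#(I,G)\ge\#(I,F)$ from the analytic eigenvalue branches of Theorem \ref{thm:G} (via Lemma \ref{lem:ah}(i)) and the reverse inequality from a Rayleigh-quotient/min--max argument built on the stability estimates of Lemma \ref{lem:uniform}, whereas you use the same stability lemma only to produce the operator-norm bound $\|\Kcal_F^*-V^{-1}\Kcal_G^*V\|_{L^2\to L^2}\le C'\Ge$ and then count spectrum with Riesz projections. That strategy is viable, and your $L^2$ versus $H^{-1/2}$ bookkeeping can be made precise: the rank of a Riesz projection counts \emph{algebraic} multiplicity in $L^2$, Jordan blocks are excluded because $L^2(\p D_F)\subset H^{-1/2}(\p D_F)$ and $\Kcal_F^*$ is self-adjoint for $\la\cdot,\cdot\ra_F$ (likewise for $G$), and eigenfunctions of nonzero eigenvalues lie in $L^2$ by smoothing --- the paper uses exactly this last fact --- so the rank indeed equals the multiplicity count appearing in the lemma. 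You state this only for geometric multiplicities, so spell out the embedding argument, but that is cosmetic.

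The genuine gap is the identification $\rank \tilde P_G=\#(I,G)$. The Riesz projection counts eigenvalues of $\Kcal_G^*$ \emph{inside $\Gamma$}, not in $I$, and your contour encloses only a neighborhood of $\sigma(\Kcal_F^*)\cap I$. The portion of $I$ at positive distance from $\sigma(\Kcal_F^*)$ is harmless: enclose the whole segment $[b,1/2-\Gd]$ (possible since the eigenvalues of $\Kcal_F^*$ do not accumulate at $1/2$), or use the uniform resolvent bound on that compact subset of the resolvent set to exclude eigenvalues of $\tilde{\Kcal}_G^*$ there for small $\Ge$. But the subinterval of $I$ adjacent to the endpoint $1/2$ cannot be treated this way, because $1/2\in\sigma(\Kcal_F^*)$: no admissible contour separates it from points of $I$ arbitrarily close to $1/2$, and a priori $\Kcal_G^*$ could have eigenvalues just below $1/2$, i.e.\ in $I$ but outside $\Gamma$; your argument would then only yield $\#(I,G)\ge\#(I,F)$, not equality. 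Closing this requires the extra input, available in the paper, that $1/2$ is an eigenvalue of the NP operator on \emph{every} such boundary and is simple: for instance, run a second Riesz-projection comparison on a small circle about $1/2$ (rank $1$ for $\Kcal_F^*$ by simplicity, hence rank $1$ for $\tilde{\Kcal}_G^*$ for small $\Ge$, and that single unit is already accounted for by the eigenvalue $1/2$ of $\Kcal_G^*$, so $\Kcal_G^*$ has no eigenvalue in $I$ near $1/2$), or enclose $1/2$ inside $\Gamma$ and subtract its simple contribution on both sides. For $I=(-1/2,-a)$ no such issue arises, since $-1/2$ is not in the NP spectrum and you may enclose $[-1/2-\Gd,-a]$ outright. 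With these additions your proof is complete.
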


\begin{proof}
We only prove lemma for the case when $I=(b,1/2)$. Suppose that $\#(I,F)=N$, and let $\Gl_i$, $1 \le i \le N$, be the eigenvalues of $\Kcal_F^*$ in $I$. Suppose that $\Gr(F,G)< \Ge$ for a sufficiently small $\Ge$. Then, by Lemma \ref{lem:ah} (i), there is $a \in C^\infty(\p D_F)$ with $\max_{x \in \p D_F} |a(x)| \le 1$ such that $\p D_G =  \p D_F(\Ge_1 a)$ ($\Ge_1=C\Ge$). Let $h_0$ be the number appearing in Theorem \ref{thm:G}. If $\Ge_1 < h_0$, then there are real analytic functions $\Gl^{(i)}(h)$, $e^{(i)}(h)$ ($1 \le i \le N$) defined for $|h| < h_0$ such that $\Gl^{(i)}(0)=\Gl_i$, $\Gvf_i:=e^{(i)}(0)$ is the corresponding eigenfunction, and $\Gl^{(i)}(\Ge_1)$ is an eigenvalue of $\Kcal_G^*$ with the corresponding eigenfunction $e^{(i)}(\Ge_1)$ where all the eigenfunctions are normalized by \eqref{eq:normalization}. Thus $\#(I,G) \ge N$. We note that each $\Gvf_i$ belongs to $L^2(\p\GO)$. In fact, since $\Kcal_F^*$ is bounded from $H^{-1/2}(\p\GO)$ into $L^2(\p\GO)$, $\Gvf_i= (\Gl_i^F)^{-1} \Kcal_F^*[\Gvf_i] \in L^2(\p\GO)$.

Suppose $\#(I,G) > N$. Then there is $\psi \in H^{-1/2}(\p D_G)$ such that $\| \psi \|_G=1$, $\Kcal_G^*[\psi]= \Gl \psi$ for some $\Gl \in I$, and
$$
\la e^{(i)}(\Ge_1), \psi \ra_G=0, \quad 1 \le i \le N.
$$
As before, we have $\psi \in L^2(\p D_G)$. Let $\Psi(x)= x+ \Ge_1 a(x) n(x)$ for $x \in \p D_F$. Then $\Psi$ is a diffeomorphism between $\p D_F$ and $\p D_G$ and can be extended to a tubular neighborhood of $\p D_F$ as a diffeomorphism as explained in the proof of Theorem \ref{thm:G}. We then have
\beq\label{eq:1000}
|\la e^{(i)}(\Ge_1) \circ \Psi, \psi \circ \Psi \ra_F| \le C\Ge, \quad 1 \le i \le N.
\eeq
In fact, since $\psi \in L^2(\p D_G)$, we have
\begin{align*}
\la e^{(i)}(\Ge_1) \circ \Psi, \psi \circ \Psi \ra_F &= -\la e^{(i)}(\Ge_1) \circ \Psi, \Scal_F [\psi \circ \Psi] \ra \\
&= -\la e^{(i)}(\Ge_1) \circ \Psi, \Scal_G [\psi] \circ \Psi \ra + O(\Ge) \\
&= \la e^{(i)}(\Ge_1) , \psi \ra_G + O(\Ge) = O(\Ge),
\end{align*}
where the second equality holds thanks to \eqref{eq:stablesingle} and the third equality holds since $|J\Psi| - 1 =O(\Ge)$ where $|J\Psi|$ is the Jacobian of $\Psi$.

Since $h \mapsto e^{(i)}(h)$ is real analytic, we have $\|e^{(i)}(\Ge_1) \circ \Psi-\Gvf_i\|_F \le C \Ge$ for some $C>0$ for all $i$. Thus we infer using \eqref{eq:1000} that
$$
\la \Gvf_i, \psi \circ \Psi \ra_F = \la \Gvf_i - e^{(i)}(\Ge_1) \circ \Psi, \psi \circ \Psi \ra_F + \la e^{(i)}(\Ge_1) \circ \Psi, \psi \circ \Psi \ra_F = O(\Ge), \quad 1 \le i \le N.
$$

Let
$$
\Gvf:= \psi \circ \Psi - \sum_{i=1}^N \la \Gvf_i, \psi \circ \Psi \ra_F \Gvf_i.
$$
Then $\la \Gvf_i, \Gvf \ra_F =0$ for $i=1,2, \ldots, N$. Moreover, we have
\begin{align*}
\|\Gvf\|_F^2 &= \la \psi \circ \Psi, \psi \circ \Psi \ra_F - \sum_{i=1}^N \la \Gvf_i, \psi \circ \Psi \ra_F^2 \\
&= \la \psi , \psi \ra_G + O(\Ge^2) \\
&= 1 + O(\Ge^2).
\end{align*}
Thanks to \eqref{eq:stablenp}, we also have
\begin{align*}
\la \Gvf, \Kcal_F^*[\Gvf] \ra_F &= \la \psi \circ \Psi, \Kcal_F^*[\psi \circ \Psi] \ra_F - \sum_{i=1}^N \Gl_i^F \la \Gvf_i, \psi \circ \Psi \ra_F^2 \\
&= \la \psi , \Kcal_G^*[\psi] \ra_G + O(\Ge^2) \\
&= \Gl + O(\Ge^2).
\end{align*}
Thus, we have
$$
\frac{|\la \Gvf, \Kcal_F^*[\Gvf] \ra_F|}{\|\Gvf\|_F^2}= \frac{|\Gl| + O(\Ge^2)}{1 + O(\Ge^2)} \in I
$$
if $\Ge$ is sufficiently small.

So far, we proved that $E:= \{\Gvf_1, \Gvf_2, \ldots, \Gvf_N, \Gvf/\|\Gvf\|_F \}$ is an orthonormal set such that for each $f \in E$, we have $\la f, \Kcal_F^*[f] \ra_F \in I$. This implies via the Min-Max Principle that there are at least $N+1$ eigenvalues of $\Kcal_F^*$ in $I$. This contradicts the assumption that $\#(I,F)=N$. Thus \eqref{2000} holds.
\end{proof}

Let $F \in \Mcal$.
For a nonzero eigenvalue $\Gl$ of $\Kcal_F^*$ and the corresponding normalized eigenfunction $\Gvf$, let
\beq
I_\Gvf (x)= |\nabla_\p \Scal_F[\Gvf](x)|^2 - c(\Gl) (\p_n \Scal_F[\Gvf] |_- (x))^2, \quad x \in \p D_F.
\eeq
Note that $I_\Gvf$ is the function appearing in the formula \eqref{Hadamard_variation}.

We will also use the following lemma.

\begin{lemma}\label{eq:not=}
Let $F \in \Mcal$. If $\Gl$ is a nonzero, non-simple eigenvalue of $\Kcal_F^*$, then there are two normalized eigenfunctions $\Gvf_1, \Gvf_2$ corresponding to $\Gl$ such that
\beq\label{eq:not=2}
I_{\Gvf_1} (x) \neq I_{\Gvf_2} (x) \quad \text{for some $x \in \p D_F$}.
\eeq
\end{lemma}

\begin{proof}
Let $\Gvf_1, \Gvf_2$ be orthonormal eigenfunctions corresponding to the eigenvalue $\Gl$ and assume that \eqref{eq:not=2} does not hold, that is, $I_{\Gvf_1} (x) = I_{\Gvf_2} (x)$ for all $x \in \p D_F$. Then, we have
\begin{align}
  & \big( \nabla_\p u_1 (x) - \nabla_\p u_2 (x) \big) \cdot \big(\nabla_\p u_1 (x) + \nabla_\p u_2 (x) \big) \nonumber \\
  & \quad = c(\Gl) \big(\p_n u_1 |_-(x) - \p_n u_2 |_-(x) \big) \big(\p_n u_1 |_-(x) + \p_n u_2 |_- (x) \big), \label{eq:id}
\end{align}
where $u_j=\Scal_F[\Gvf_j]$ ($j=1,2$).

Let $x_0$ be the maximum point of $u_1-u_2$ on $\ol{D_F}$. Since $u_1-u_2$ is a harmonic function, $x_0 \in \p D_F$ by the maximum principle (see \cite{GT}) and hence
\begin{equation}\label{1=2}
 \nabla_\p (u_1-u_2) (x_0) =  0.
\end{equation}
However, by Hopf's lemma, we have
\begin{equation*}
  0 < \p_n (u_1-u_2)|_{-} (x_0) .
\end{equation*}
We then infer from \eqref{eq:id} that
\beq\label{eq:1=-2}
\p_n u_1 |_-(x_0) + \p_n u_2|_-(x_0)  = 0.
\eeq
In particular, we have
\beq\label{eq:nonzero}
\p_n u_1 |_-(x_0) \neq 0.
\eeq

We now consider the following function:
\begin{align} \label{integrand_w_theta}
 \GL(\Gt) := I_{(\cos{\Gt}) \Gvf_1 + (\sin{\Gt}) \Gvf_2}(x_0), \quad \Gt \in \Rbb.
\end{align}
We show that $\GL$ is not constant.
Suppose on the contrary that $\GL$ is constant. Then, we have
\begin{align*}
 \frac{d\GL}{d\Gt}
  &= 2 \big( - \sin{\Gt} \nabla_\p u_1(x_0) + \cos{\Gt} \nabla_\p u_2(x_0) \big) \cdot \big( \cos{\Gt} \nabla_\p u_1(x_0) + \sin{\Gt} \nabla_\p u_2(x_0) \big) \\
  & \quad + 2c(\Gl) \big( - \sin{\Gt} \p_n u_1(x_0) + \cos{\Gt} \p_n u_2(x_0) \big)
  \big( \cos{\Gt} \nabla_n u_1(x_0) + \sin{\Gt} \nabla_n u_2(x_0) \big) \\
  &=0
\end{align*}
for any $\Gt \in \Rbb$.
Choosing $\Gt = 0$, we have
\begin{equation} \label{theta=45}
  \nabla_\p u_1(x_0) \cdot \nabla_\p u_2(x_0)  - c(\Gl) \p_n u_1(x_0) \cdot \p_n u_2(x_0) = 0.
\end{equation}
It then follows from \eqref{1=2} and \eqref{eq:1=-2} that
$$
  |\nabla_\p u_1(x_0)|^2 + c(\Gl) |\p_n u_1 |_-(x_0)|^2  = 0.
$$
Since $c(\Gl) >0$, we have $\p_n u_1 |_-(x_0)  = 0$, which contradicts \eqref{eq:nonzero}. Thus, $\GL(\Gt)$ is not constant in $\Gt$. Choose $\Gt_0$ such that $\GL(\Gt_0) \neq \GL(0)$, namely, $I_{(\cos{\Gt_0}) \Gvf_1 + (\sin{\Gt_0}) \Gvf_2}(x_0) \neq I_{\Gvf_1}(x_0)$. By replacing $\Gvf_2$ with $(\cos{\Gt_0}) \Gvf_1 + (\sin{\Gt_0}) \Gvf_2$, we achieve \eqref{eq:not=2}.
\end{proof}

Let $F \in \Mcal$. We denote eigenvalues of $\Kcal_F^*$ by $\Gl_j^{\pm,F}$ counting multiplicities where $+$ means positive eigenvalues and $-$ negative ones. They are enumerated in such a way that
\begin{equation*}
  \frac{1}{2}  > |\Gl_1^{\pm,F}| \ge |\Gl_2^{\pm,F}| \ge \cdots \to 0.
\end{equation*}
We mentioned that $1/2$ is a simple eigenvalue. We also mention that $\Kcal_F^*$ may have only finitely many negative eigenvalues. For example, if $D_F$ is strictly convex, it is the case as proved in \cite{MR19}.

For a positive integer $k$, let
\begin{equation}
  \Mcal_{k}^\pm := \{ F \in \Mcal \mid \Gl_i^{\pm,F}\text{ is simple for } 1 \le i \le k \}.
\end{equation}
Then we have
\begin{equation}\label{3000}
  \Mcal_s = \bigcap_{k=1}^\infty (\Mcal_{k}^+ \cap \Mcal_{k}^-).
\end{equation}

We are now ready to prove Theorem \ref{thm: generic simlicity}

\begin{proof}[Proof of Theorem \ref{thm: generic simlicity}]
We shall show that each $\Mcal_{k}^\pm$ is open and dense in the complete metric space $\ol{\Mcal}$. It follows from Baire's theorem (see, for example, \cite{Oxtoby}) and \eqref{3000} that $\Mcal_s$ is Baire typical in $\Mcal$ as desired.

We only prove $\Mcal_{k}^+$ is open and dense in $\ol{\Mcal}$ since the case for $\Mcal_{k}^-$ can be dealt with in the same way. To prove that $\Mcal_{k}^+$ is open, let $F \in \Mcal_{k}^+$. Choose $b>0$ so that $\Gl_k^{+,F} > b > \Gl_{k+1}^{+,F}$. If $\Gr(F,G)< \Ge$ for a sufficiently small $\Ge$, then the number of eigenvalues of $\Kcal_G^*$ in $(b,1/2)$ is $k$. Moreover, like the proof of Lemma \ref{eq:number}, we use by Lemma \ref{lem:ah} (i) and Theorem \ref{thm:G} to show that they are all different. Thus $G \in \Mcal_{k}^+$.

We now show that $\Mcal_{k+1}^+$ is dense in $\Mcal_{k}^+$ for $k=0,1,\ldots$ with $\Mcal_{0}^+=\Mcal$. Put $\Gl_0^+=1/2$. Suppose that $F \in \Mcal_{k}^+ \setminus \Mcal_{k+1}^+$. Then there is integer $N \ge 2$ such that
\begin{equation}\label{1000}
\Gl_{k}^{+,F} > \Gl_{k+1,}^{+,F} = \ldots = \Gl_{k+N}^{+,F} > \Gl_{k+N+1}^{+,F} .
\end{equation}

Let $\Gl:=\Gl_{k+1}^{+,F} = \ldots = \Gl_{k+N}^{+,F}$. Choose $b$ so that
$$
\Gl_{k+N}^{+,F} > b > \Gl_{k+N+1}^{+,F}.
$$

Suppose that $N=2$. Then $\Gl$ is an eigenvalue of multiplicity $2$. By Lemma \ref{eq:not=}, there are two normalized eigenfunctions $\Gvf^{(1)}, \Gvf^{(2)}$ corresponding to $\Gl$ satisfying
$$
I_{\Gvf_1} (x) \neq I_{\Gvf_2} (x) \quad \text{for some $x \in \p D_F$}.
$$
Choose $a \in C^\infty(\p D_F)$ with $\max_{x \in \p D_F} |a(x)| \le 1$ such that
$$
\int_{\p \GO} a I_{\Gvf^{(1)}} d\sigma \neq \int_{\p \GO} a I_{\Gvf^{(2)}} d\sigma.
$$
Then, by Theorem \ref{thm:G}, there are analytic branches, say $\Gl^{(k+1)}(h)$ and $\Gl^{(k+2)}(h)$, such that $\Gl^{(k+1)}(0)=\Gl^{(k+2)}(0)=\Gl$ and
\beq\label{eq:onetwoneq}
\frac{d \Gl^{(k+1)}}{dh} (0) \neq \frac{d \Gl^{(k+2)}}{dh} (0).
\eeq
Then $\Gl^{(k+1)}(\Ge) \neq \Gl^{(k+2)}(\Ge)$ for all sufficiently small $\Ge$. We may assume that $\Gl^{(k+1)}(\Ge) > \Gl^{(k+2)}(\Ge)$ without loss of generality. Let $\Gl^{(i)}(\Ge)$ be the analytic functions given in Theorem \ref{thm:G} such that $\Gl^{(i)}(0)= \Gl_i^{+,F}$, $1 \le i \le k$. Since $\Gl^{(1)}(0) > \Gl^{(2)}(0) > \ldots > \Gl^{(k)}(0) > \Gl$, we have
$$
\Gl^{(1)}(\Ge) > \Gl^{(2)}(\Ge) > \ldots > \Gl^{(k)}(\Ge) > \Gl^{(k+1)}(\Ge) > \Gl^{(k+2)}(\Ge)
$$
for all sufficiently small $\Ge$.
By Lemma \ref{lem:ah} (ii), there is $G \in \Mcal$ with $\Gr(F,G) < C\Ge$ such that $\p D_G= \p D_F(\Ge a)$.
Moreover, the number of eigenvalues of $\Kcal_G^*$ lying in $(b, 1/2)$ is $k+2$. Thus we have
$$
\Gl_1^{+,G} > \Gl_2^{+,G} > \ldots > \Gl_{k+2}^{+,G} .
$$
In particular, $G \in \Mcal_{k+1}$ and $\Gr(F,G) < C\Ge$. Thus $\Mcal_{k+1}^+$ is dense in $\Mcal_{k}^+$ if $N=2$.

Suppose that $N \ge 3$. We choose two normalized eigenfunctions $\Gvf^{(1)}, \Gvf^{(2)}$ in $E$ satisfying \eqref{eq:not=}. Extend them to a basis of the eigenspace corresponding to $\Gl$ and denote them by $\Gvf^{(1)}, \Gvf^{(2)}, \ldots, \Gvf^{(N)}$. By the same argument as above, we see that there $G \in \Mcal$ with $\Gr(F,G) < C\Ge$ such that
$$
\Gl_1^{+,G} > \Gl_2^{+,G} > \ldots > \Gl_{k+1}^{+,G} \ge \Gl_{k+2}^{+,G} \ge \ldots \ge \Gl_{k+N}^{+,G}
$$
and at most $N-1$ of $\Gl_{k+1}^{+,G} , \ldots , \Gl_{k+N}^{+,G}$ can be identical. If $\Gl_{k+1}^{+,G}$ is simple, then we are done. If not, then $\Gl_{k+1}^{+,G} = \ldots = \Gl_{k+M}^{+,G}$ for some $2 \le M < N$. We then repeat the same arguments until $M=2$. This completes the proof.
\end{proof}

\section{Proof of Theorems \ref{cyclic2} and \ref{cyclic3}}\label{sec:three}

We first prove the following simple lemma.

\begin{lemma}\label{lem:M<N}
Let $H$ be a separable Hilbert space and let $K$ be a self-adjoint compact operator on $H$. Suppose that the maximal multiplicity of eigenvalues of $K$ is $N < +\infty$. If
$$
L_{\xi_1} + L_{\xi_2} + \cdots + L_{\xi_M} =H,
$$
then $M \ge N$.
\end{lemma}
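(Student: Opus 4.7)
The plan is to use the spectral theorem for self-adjoint compact operators combined with a projection argument onto a maximal-multiplicity eigenspace. By hypothesis, there exists an eigenvalue $\lambda_0$ of $K$ whose eigenspace $E_{\lambda_0}$ has dimension exactly $N$. Let $P$ denote the orthogonal projection from $H$ onto $E_{\lambda_0}$. Since $K$ is self-adjoint and $E_{\lambda_0}$ is $K$-invariant, $P$ commutes with $K$.

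The key observation is that for each $i$, the subspace $P(L_{\xi_i})$ is at most one-dimensional. Indeed, by continuity of $P$ and because $PK = KP = \lambda_0 P$ on $E_{\lambda_0}$'s range side of things, one has
\begin{equation*}
P(L_{\xi_i}) \subseteq \overline{P\bigl(\mathrm{span}\{K^n \xi_i \,:\, n \ge 0\}\bigr)} = \overline{\mathrm{span}\{\lambda_0^n P\xi_i \,:\, n \ge 0\}} = \mathrm{span}\{P\xi_i\},
\end{equation*}
which is a closed subspace of dimension $\le 1$.

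Now apply $P$ to the identity $L_{\xi_1} + \cdots + L_{\xi_M} = H$. Linearity of $P$ gives $P(L_{\xi_1}) + \cdots + P(L_{\xi_M}) = P(H) = E_{\lambda_0}$. The left-hand side is a sum of $M$ subspaces of dimension at most $1$, so it has dimension at most $M$, while the right-hand side has dimension exactly $N$. Therefore $M \ge N$, as required.

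There is no genuine obstacle in this argument; the only point that deserves care is verifying that $P$ distributes over the (algebraic) sum of the cyclic subspaces and that the projection of the closure is controlled by the closure of the projection of a dense subset—both follow immediately from linearity and continuity of $P$. The proof uses only the spectral decomposition of $K$ and the fact that the projection of any cyclic subspace onto a single eigenspace is a scalar-multiple orbit, hence at most one-dimensional.
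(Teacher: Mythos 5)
Your proof is correct and is essentially the paper's argument: both project onto an eigenspace $E_{\lambda_0}$ of maximal dimension $N$ and exploit the fact that $K$ acts there as the scalar $\lambda_0$, so each cyclic subspace contributes at most the one-dimensional space $\mathrm{span}\{P\xi_i\}$ to the projection. The paper packages this contrapositively (if $M<N$ there is $\phi\in E_{\lambda_0}$ with $\langle K^k\xi_l,\phi\rangle=\lambda_0^k\langle\xi_l,\phi\rangle=0$ for all $k,l$), while you run the dimension count directly; the two formulations are interchangeable.
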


\begin{proof}
Let $\xi_1, \xi_2, \ldots, \xi_M \in H$ with $M<N$. Let $E$ be an eigenspace corresponding to an eigenvalue $\Gl$ such that $\dim E =N$. Let $P$ be the orthogonal projection on $E$. Then, $\dim P (\mbox{span}\{\xi_1, \xi_2, \ldots, \xi_M \}) \leq M <N$. Thus
there exists $\phi \in E$ such that $\phi \perp P (\mbox{span}\{\xi_1, \xi_2, \ldots, \xi_M \})$. Since $P \phi =\phi$, $\phi \perp \xi_l$ for all $l$.
It then follows that
$$
\langle K^{k} \xi_l, \phi \rangle = \Gl_j^k \langle \xi_l, \phi \rangle =0 \quad \forall k \in {\mathbb N}, \forall l=1, 2, \ldots, M,
$$
and hence
$$
\phi \in (L_{\xi_1} + \cdots + L_{\xi_M})^{\perp}.
$$
So $L_{\xi_1} + \cdots + L_{\xi_M} \neq H$.
\end{proof}

The following theorem characterizes those $N$-cyclic vectors $\xi_1, \xi_2, \ldots, \xi_N \in H$.

\begin{theorem}\label{cyclic}
Let $H$ be a separable Hilbert space and let $K$ be a self-adjoint compact operator on $H$. Suppose that the maximal multiplicity of the eigenvalues of $K$ is $N < +\infty$. Then, \eqref{fulltwo} holds if and only if for any eigenvalue $\Gl$ of multiplicity $N_\Gl$ the $N_\Gl \times N$ matrix $A_\Gl := \left( \langle \Gvf_j, \xi_k \rangle \right)$ satisfies
\begin{equation}\label{rank}
\rank A_\Gl =N_\Gl,
\end{equation}
where $\{\Gvf_1, \Gvf_2, \ldots, \Gvf_{N_\Gl} \}$ is an orthonormal basis of the eigenspace of $\Gl$.
\end{theorem}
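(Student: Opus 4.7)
The plan is to prove the two implications separately, relying throughout on the spectral decomposition $H = \bigoplus_\Gl E_\Gl$ of the compact self-adjoint operator $K$ and on a Stone--Weierstrass moment argument. I write $P_\Gl$ for the orthogonal projection onto $E_\Gl$, and decompose any $\psi \in H$ as $\psi = \sum_\Gl \psi_\Gl$ with $\psi_\Gl = P_\Gl \psi$.

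For the necessary direction I would argue by contrapositive, in the same spirit as Lemma \ref{lem:M<N}. If $\rank A_\Gl < N_\Gl$ for some eigenvalue $\Gl$, a linear dependence among the rows of $A_\Gl$ supplies scalars $\alpha_1, \ldots, \alpha_{N_\Gl}$, not all zero, with $\sum_j \alpha_j \langle \Gvf_j, \xi_k \rangle = 0$ for every $k$. The vector $\Gvf := \sum_j \alpha_j \Gvf_j$ is then a nonzero element of $E_\Gl$ orthogonal to every $\xi_k$, and combining self-adjointness with the eigenvalue equation gives $\langle \Gvf, K^n \xi_k \rangle = \Gl^n \langle \Gvf, \xi_k \rangle = 0$ for every $n \ge 0$ and every $k$. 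Hence $\Gvf$ is orthogonal to $L_{\xi_1} + \cdots + L_{\xi_N}$, contradicting \eqref{fulltwo}.

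The substantive content lies in the converse. The plan is to pick any $\psi \in (L_{\xi_1} + \cdots + L_{\xi_N})^\perp$ and show each $\psi_\Gl$ vanishes. Setting $c_\Gl^k := \langle \psi_\Gl, \xi_k \rangle = \langle \psi_\Gl, P_\Gl \xi_k \rangle$, the orthogonality assumption together with self-adjointness yields the family of scalar identities $0 = \langle K^n \psi, \xi_k \rangle = \sum_\Gl \Gl^n c_\Gl^k$ for every $n \ge 0$ and every $k$. A Cauchy--Schwarz estimate gives $\sum_\Gl |c_\Gl^k| \le \|\psi\| \, \|\xi_k\|$, so $\mu_k := \sum_\Gl c_\Gl^k \delta_\Gl$ is a finite (complex) atomic measure supported on the compact set $\sigma(K) \subset \mathbb{R}$, all of whose moments vanish. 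Stone--Weierstrass density of polynomials in $C(\sigma(K))$ then forces $\mu_k = 0$ for each $k$, so $\langle \psi_\Gl, P_\Gl \xi_k \rangle = 0$ for every eigenvalue $\Gl$ and every $k$. Finally, at a fixed $\Gl$ the rank hypothesis is equivalent to $\mathrm{span}\{P_\Gl \xi_1, \ldots, P_\Gl \xi_N\} = E_\Gl$, so the preceding identities force $\psi_\Gl = 0$; summing over $\Gl$ yields $\psi = 0$.

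The main obstacle I anticipate is the accumulation of the nonzero eigenvalues of $K$ at $0$, which prevents one from isolating a single eigenvalue by a polynomial in $K$; in particular, there is no clean recipe exhibiting $P_\Gl \xi_k$ in the form $p(K)\xi_k$ with $p$ a polynomial when $\Gl$ sits near the accumulation point. The moment argument sidesteps this obstruction by treating $\mu_k$ as a single compactly supported measure rather than approximating each spectral projection individually. The finiteness of the spectral multiplicity $N$ is also used implicitly, since the kernel $E_0$ (if nontrivial) must then be finite-dimensional, so the rank condition at $\Gl = 0$ remains meaningful.
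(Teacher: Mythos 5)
Your proposal is correct, but it reverses the paper's division of labor and genuinely differs on the sufficiency half. For \eqref{fulltwo} $\Rightarrow$ \eqref{rank} the paper argues directly: it approximates each $\varphi_j$ by $\sum_k p_k^j(K)\xi_k$ and shows that $A_\lambda B$, with $B=(p_k^l(\lambda))$, has dominant diagonal and is therefore invertible, forcing $\rank A_\lambda=N_\lambda$; you instead prove the contrapositive by the same orthogonality device as Lemma \ref{lem:M<N} (a rank-deficient $A_\lambda$ yields a nonzero $\varphi\in E_\lambda$ with $\langle\varphi,K^n\xi_k\rangle=\lambda^n\langle\varphi,\xi_k\rangle=0$), which is shorter and equally valid. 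For \eqref{rank} $\Rightarrow$ \eqref{fulltwo} the paper again passes to the contrapositive and asserts that if the sum is not all of $H$ then its orthogonal complement contains a normalized eigenfunction of $K$ --- a step left implicit, which rests on the complement being a nonzero $K$-invariant subspace to which the spectral theorem for the compact self-adjoint restriction applies. Your direct argument replaces this by the vanishing-moment identities $\sum_\lambda \lambda^n\langle\psi,P_\lambda\xi_k\rangle=0$, the $\ell^1$ bound from Cauchy--Schwarz, and Stone--Weierstrass on $\sigma(K)$, giving $\langle\psi,P_\lambda\xi_k\rangle=0$ for every eigenvalue including the accumulation point $0$; the rank hypothesis (equivalently $\mathrm{span}\{P_\lambda\xi_1,\ldots,P_\lambda\xi_N\}=E_\lambda$) and completeness of the spectral decomposition then give $\psi=0$. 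This makes explicit what the paper glosses over and handles the eigenvalue $0$ uniformly, at the cost of a slightly heavier analytic tool. One caveat, shared with the paper rather than particular to you: both arguments for sufficiency establish only that $(L_{\xi_1}+\cdots+L_{\xi_N})^{\perp}=\{0\}$, i.e.\ that the (not necessarily closed) sum of the cyclic subspaces is dense; the equality literally written in \eqref{fulltwo} is being read as density, here as in the paper's own contrapositive, which likewise needs a nonzero orthogonal complement to produce its eigenfunction.
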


If $N=1$, namely, all the eigenvalues are simple, then the condition \eqref{rank} amounts to
\beq
\la \xi, \Gvf \ra \neq 0 \quad \mbox{for any eigenfunction $\Gvf$}.
\eeq
Thanks to \eqref{rank}, it is now easy to construct $N$-cyclic vectors. Suppose that the maximal multiplicity of the eigenvalues of a self-adjoint compact operator $K$ is $N < +\infty$. Let $\{ \Gl_1, \Gl_2, \ldots \}$ be the eigenvalues of $K$ and $N_j$ be the multiplicity of $\Gl_j$. Then $N_j \le N$. Let $\{\Gvf_{j,1}, \Gvf_{j,2}, \ldots, \Gvf_{j,N} \}$ be such that the first $N_j$ of them are orthonormal vectors spanning the corresponding eigenspace and the rest are zero vectors. Then, define for $k=1,2,\ldots, N$
$$
\xi_k := \sum_{j=1}^\infty c_{j,k} \Gvf_{j,k}
$$
where $c_{j,k} \neq 0$ for any $j,k$. These vectors $\xi_1, \xi_2, \ldots, \xi_N$ constitute $N$-cyclic vectors for $K$.

\begin{proof}[Proof of Theorem \ref{cyclic}]
Suppose that \eqref{fulltwo} holds. Let $\Gl$ be an eigenvalue of $K$ and let $\{\Gvf_1, \Gvf_2, \ldots, \Gvf_{N_\Gl} \}$ be an orthonormal basis of the eigenspace of $\Gl$. Then, $\Gvf_j \in L_{\xi_1} + \cdots + L_{\xi_N}$ for $j=1, 2, \ldots, N_\Gl$. Thus, for any $\Ge>0$, there are polynomials $p_k^j$, $1 \le k \le N, \ 1 \le j \le N_\Gl$, such that $\psi_j :=\sum_{k=1}^N p_k^j(K) \xi_k$ satisfies $\| \psi_j - \Gvf_j \| < \Ge$. We then have $\la \psi_j, \Gvf_j \ra > 1-\Ge$. Moreover, since $\la \Gvf_j, \Gvf_{l} \ra =0$ if $j \neq l$, we have $|\la \psi_j, \Gvf_{l} \ra| < \Ge $ if $j \neq l$. We have
$$
\sum_{k=1}^N \langle \Gvf_j, \xi_k \rangle p_k^l(\Gl) = \sum_{k=1}^N \langle p_k^l(K) \Gvf_j, \xi_k \rangle = \left\la \Gvf_j, \psi_l  \right\ra , \quad j, l=1, 2, \ldots, N.
$$
So, if we set $B$ be the $N \times N_\Gl$ matrix $B=(p_k^l(\Gl))$, the diagonal entries of $A_\Gl B$ are larger than $1-\Ge$ in their absolute values, off-diagonal ones are less than $\Ge$. So, if $\Ge$ is sufficiently small, then $A_\Gl B$ is invertible, and hence \eqref{rank} holds.

Conversely, assume that \eqref{fulltwo} does not hold. Then there is a normalized eigenfunction $\Gvf$ of $K$ such that $\Gvf \in (L_{\xi_1} + \cdots + L_{\xi_N})^\perp$. Let $\Gl$ be the eigenvalue corresponding to $\Gvf$ and $\{\Gvf_1, \Gvf_2, \ldots, \Gvf_{N_\Gl} \}$ be an orthonormal basis of the eigenspace of $\Gl$ with $\Gvf_1=\Gvf$.  Since $\Gvf_1 \in L_{\xi_k}^\perp$ for all $k$, we have $\langle \xi_k, \Gvf \rangle =0$ for all $k$. Thus, we have
$$
\rank \left( \langle \Gvf_j, \xi_k \rangle \right) \le N_\Gl -1.
$$
This completes the proof.
\end{proof}

We now prove Theorem \ref{cyclic2}.

\begin{proof}[Proof of Theorem \ref{cyclic2}]
We assume $d=3$ since the same proof works for the case $d \ge 2$. Let $\Gl$ be an eigenvalue of $\Kcal_{\p\GO}^*$ on $H^{-1/2}(\p\GO)$, and let $N_\Gl$ be the multiplicity of $\Gl$ and $\Gvf_1, \Gvf_2, \ldots, \Gvf_{N_\Gl}$ be corresponding orthonormal eigenfunctions.  Let $\psi_j = \Scal_{\p\GO} [\Gvf_j]$. Then, we have
\beq\label{Fj}
F_j(z):= \langle q_z, \Gvf_j \rangle_{\p\GO} = v \cdot \nabla \int_{\p\GO} \Gamma(z-x) \psi_j(x) d\sigma(x),
\eeq
namely,
$$
F_j(z)= v \cdot \nabla \Scal_{\p\GO} [\psi_j](z), \quad z \in \Rbb^3 \setminus \ol{\GO}.
$$
Note that $F_j$ is harmonic in $\Rbb^d \setminus \ol{\GO}$. We emphasize that $F_j$ is not identically zero. In fact, if $F_j \equiv 0$ for some constant vector $v$, we may assume $v=(1,0,0)$ by rotation if necessary. There is a harmonic function $u$ of two variables such that
$$
\Scal_{\p\GO} [\psi_j](z_1,z_2,z_3)= u(z_2,z_3)
$$
for all $z= (z_1,z_2,z_3)$. Since $\Scal_{\p\GO} [\psi_j](z) \to 0$ as $|z| \to \infty$, we infer $u(z_2,z_3)=0$ by sending $z_1$ to $\infty$. Thus $\Scal_{\p\GO} [\psi_j](z)=0$ for all $z \in \Rbb^3 \setminus \ol{\GO}$. In particular, $\Scal_{\p\GO} [\psi_j] \equiv 0$ on $\p\GO$. Since $\Scal_{\p\GO} [\psi_j] \in H^{1/2}(\p\GO)$, it follows from the uniqueness of the solution to Dirichlet's problem that $\Scal_{\p\GO} [\psi_j](z)=0$ for all $z \in \GO$. By the jump formula \eqref{singlejump}, we have
$$
\psi_j = \p_n \Scal_{\p\GO} [\psi_j] |_+ - \p_n \Scal_{\p\GO} [\psi_j] |_- \quad\text{on } \p\GO.
$$
Thus $\psi_j=0$ which is absurd. So, $F_j$ is not identically zero.

Choose $N$ different points $z_1, \ldots, z_N \notin \ol{\GO}$ and let $\xi_k:= q_{z_k}$. Then, the matrix $A_\Gl= (\langle \xi_k, \Gvf_j \rangle)$ is given by
$$
A_\Gl = ( F_j(z_k) ).
$$
Let $A_{\Gl,m}$, $m=1, 2, \ldots, M_\Gl:= \begin{pmatrix} N \\ N_\Gl \end{pmatrix}$, be $N_\Gl \times N_\Gl$ submatrices of $A_\Gl$. Let
$$
G_\Gl(z_1, \ldots, z_N):= \sum_{m=1}^{M_\Gl} (\det A_{\Gl,m})^2.
$$
Note that $\rank A_\Gl < N_\Gl$ if and only if $G_j(z_1, \ldots, z_N)=0$.

Let
$$
Z_\Gl := \{ (z_1, \ldots, z_N) \in (\Rbb^3 \setminus \ol{\GO})^N \mid G_\Gl(z_1, \ldots, z_N)=0 \}.
$$
Then, $\rank A_\Gl = N_\Gl$ for any eigenvalue $\Gl$ of $\Kcal_{\p\GO}^*$ if $(z_1, \ldots, z_N) \notin \cup Z_\Gl$ where the union is taken over all eigenvalues $\Gl$ of $\Kcal_{\p\GO}^*$.  Since $F_j$ is harmonic in $\Rbb^3 \setminus \ol{\GO}$, $G_\Gl$ is real analytic and hence $Z_\Gl$ is of Lebesgue measure zero, and so is $\cup Z_\Gl$. Thus \eqref{full3} follows from Theorem \ref{cyclic}.
\end{proof}

\begin{proof}[Proof of Theorem \ref{cyclic3}]
Let $\{ \Gvf_1, \Gvf_2, \ldots \}$ be an orthonormal system of eigenfunctions for $H^{-1/2}(\p\GO)$. Let $Z_j$ be the zero set of the function $F_j$ defined by \eqref{Fj}. Then each $Z_j$ is of measure zero and for any $z \notin \cup_{j=1}^\infty Z_j$, \eqref{eigenmode} holds.
\end{proof}

\section*{Discussion}
It would be interesting to know whether the injectivity of the NP operator is also generic among all smooth, closed hypersurfaces in Euclidean space. There are hypersurfaces where $0$ is an NP eigenvalue. For example, $0$ is an NP eigenvalue of infinite multiplicities on lemniscates in two-dimensions as mentioned in Introduction. Some oblate spheroids have $0$ as its NP eigenvalue \cite{Ritter-95} (see also \cite{FK}).


\end{document}